
\documentclass[12pt,reqno,draft]{amsart}
\usepackage[top=1in, bottom=1in, left=1in, right=1in]{geometry}
\usepackage{times, amsthm, amssymb, amsmath, amsfonts, amsthm, bm, graphicx, mathrsfs}
\usepackage{latexsym,wasysym}
\usepackage{amscd}
\usepackage{float}
\usepackage{enumerate}
\usepackage[all]{xy}
\usepackage[usenames,dvipsnames]{color}
\usepackage{tikz}
\usepackage{graphicx}
\graphicspath{ {electronics/} }
\usepackage[pagebackref=true]{hyperref}
\usepackage{lineno}

\parindent=0in
\parskip=\medskipamount


\usepackage{algpseudocode}

\usepackage{xy}
\input xy
\xyoption{all}

\newcommand{\excise}[1]{}

\newtheorem{theorem}{Theorem}[section]
\newtheorem{proposition}[theorem]{Proposition}
\newtheorem{lemma}[theorem]{Lemma}
\newtheorem{corollary}[theorem]{Corollary}

\theoremstyle{definition}
\newtheorem{definition}[theorem]{Definition}
\newtheorem{example}[theorem]{Example}
\newtheorem{remark}[theorem]{Remark}






















\newcommand\set[1]{\left\{#1\right\}}
\newcommand\abs[1]{|#1|}

 %





\begin{document}

\mbox{}
\title{Expansion of permutations as products of transpositions}
\author{Michael Anshelevich, Matthew Gaikema, Madeline Hansalik, Songyu He, Nathan Mehlhop}
\address{Mathematics Department\\Texas A\&M University\\College Station, TX 77843-3368}
\email{manshel@math.tamu.edu, matt.gaikema@tamu.edu, mahans95@tamu.edu, he1007@tamu.edu, mehl144@tamu.edu}
\subjclass[2010]{Primary 05A05, Secondary 05E10, 20C30}
\date{\today}

\begin{abstract}
We compute the number of ways a given permutation can be written as a product of exactly $k$ transpositions. We express this number as a linear combination of explicit geometric sequences, with coefficients which can be computed in many particular cases. Along the way we prove several symmetry properties for matrices associated with bipartite graphs, as well as some general (likely known) properties of Young diagrams. The methods involve linear algebra, enumeration of border strip tableau, and a differential operator on symmetric polynomials.
\end{abstract}
\maketitle


\section{Introduction}\label{s:intro}

Let $\alpha \in S_n$ be a permutation of $n$ objects. We would like to compute the number $c_k(\alpha)$ of ways of writing $\alpha$ as a product of exactly $k$ transpositions,
\[
c_k(\alpha) = \abs{\set{(\pi_1, \pi_2, \ldots, \pi_k): \alpha = \pi_1 \pi_2 \ldots \pi_k, \pi_i\text{'s transpositions}}}.
\]
It is easy to see (Lemma~\ref{Lemma:c_k}) that $c_k(\alpha)$ in fact depends only on the cycle type (conjugacy class) of $\alpha$, and so should be considered as a function not of a permutation but of an (integer) partition $\mu$. This type of question has been considered in the literature. For example, Jackson \cite[Theorem 3.1]{Jackson-Product-conjugacy} proves a more general result. However, we will see that as a function of $k$, $c_k(\mu)$ is a linear combination of a number of geometric sequences, with certain symmetries between coefficients, and such qualitative properties of $c_k$ are far from clear from Jackson's formula. Goulden \cite{Goulden-Differential} proved an explicit formula in the case that $\alpha$ is a single cycle,
\begin{equation}
\label{Goulden}
c_k(n) = \frac{1}{n!} \sum_{i=0}^{n-1} \binom{n-1}{i} (-1)^i \left[ \binom{n}{2} - n i \right]^k
\end{equation}
(see also Corollary 5.3 in \cite{Levy-Schur-Weyl}). This formula served as a starting point of our investigations, and we are grateful to Richard Stanley for bringing it to our attention.

Besides intrinsic interest, the motivation for this investigation comes from the study of matrix-valued Brownian motions, in the style of \cite{Levy-Schur-Weyl}. This direction will be pursued elsewhere.

The numbers $c_k(\mu)$ can be computed from powers of the following matrix $A_n$.

\begin{definition}
\label{Definition:A_n}
We define $A = A_n$ to be the square matrix with rows and columns indexed by partitions $\lambda \in P(n)$ of $n$, in lexicographic order, with each entry $A_{\lambda \sigma}$ giving the number of single transpositions which take a permutation of cycle type $\lambda$ to a permutation of cycle type $\sigma$; these entries are given explicitly in Proposition~\ref{Proposition:Transition-matrix}.
\end{definition}

Thus $(A_n^k)_{\lambda \sigma}$ represents the number of ways $k$ transpositions can take $\lambda$ to $\sigma$. It follows that $c_k(\mu)$ is obtained by starting with the vector corresponding to the identity permutation, applying $A_n$ $k$ times, and finding the entry corresponding to $\mu$ in the resulting vector.

\begin{example}
Let $n=4$, so that the corresponding partitions are $1111 \prec 211 \prec 22 \prec 31 \prec 4$. Then using Proposition~\ref{Proposition:Transition-matrix} below,
\[
A_4=\left(
\begin{array}{ccccc}
 0 & 6 & 0 & 0 & 0 \\
 1 & 0 & 1 & 4 & 0 \\
 0 & 2 & 0 & 0 & 4 \\
 0 & 3 & 0 & 0 & 3 \\
 0 & 0 & 2 & 4 & 0 \\
\end{array}
\right),
\]
and
\[
\left(\begin{array}{c}
    120 \\
    0 \\
    104 \\
    108 \\
    0
    \end{array}\right) =
A_4^4\left(\begin{array}{c}
    1 \\
    0 \\
    0 \\
    0 \\
    0
    \end{array}\right).
\]
By looking at the appropriate entry of this vector, we learn for example that for $\mu = 31$, $c_{4}(31) = 108$.
\end{example}

Our approach for computing the numbers $c_k(\mu)$ is to first diagonalize the transition matrix $A_n$. From Matlab calculations in Example~\ref{Example:Matlab}, we observed the following general properties of the matrix $A_n$.

\begin{proposition}
\label{Proposition:A-symmetries}
Let $A_n$ be the transition matrix just described.
\begin{enumerate}
\item
For $n \geq 2$, $A_n$ has a zero eigenvalue.
\item
The remaining eigenvalues of $A_n$ come in pairs: if $\rho$ is an eigenvalue, so is $-\rho$.
\item
The eigenvectors of $A_n$ can be chosen so that if $(v(\nu))_{\nu \in P(n)}$ is an eigenvector with eigenvalue $\rho$, then $((-1)^{n - \ell(\nu)} v(\nu))_{\nu \in P(n)}$ is an eigenvector with eigenvalue $-\rho$.
\end{enumerate}
\end{proposition}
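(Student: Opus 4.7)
The plan is to identify $A_n$ with a familiar representation-theoretic operator. Let $K_\mu = \sum_{\alpha \text{ of cycle type } \mu}\alpha$ be the conjugacy class sum in $\CC[S_n]$, and set $T = K_{(2,1^{n-2})} = \sum_\tau \tau$. A direct count from Definition~\ref{Definition:A_n} gives $T K_\mu = \sum_\sigma A_{\sigma\mu}K_\sigma$, so $A_n$ is exactly the matrix of (left or right) multiplication by $T$ on the center $Z(\CC[S_n])$ in the $\{K_\mu\}$-basis. As a sanity check, $T^k = \sum_\mu c_k(\mu) K_\mu$ yields $c_k(\mu) = (A_n^k v_0)_\mu$ with $v_0$ the standard basis vector for $K_{(1^n)}$, matching the example. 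All three claims then reduce to standard spectral facts about central elements of $\CC[S_n]$.

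Since $T$ is central, its eigenvalues are the central characters $\eta_\lambda = \binom{n}{2}\chi^\lambda(\tau)/f^\lambda$ evaluated at a transposition $\tau$, which by the classical Frobenius formula equal the content sums $\eta_\lambda = \sum_{c\in\lambda}\operatorname{content}(c)$, indexed by $\lambda\vdash n$. The corresponding eigenvectors come from the central idempotents $e_\lambda = \frac{f^\lambda}{n!}\sum_\nu \chi^\lambda(\nu)K_\nu$: up to scaling, $v^\lambda(\nu) := \chi^\lambda(\nu)$ is a column eigenvector of $A_n$ with eigenvalue $\eta_\lambda$.

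Parts (b) and (c) now follow from the conjugate-partition involution $\lambda\mapsto\lambda'$. Transposing a Young diagram negates every content, so $\eta_{\lambda'} = -\eta_\lambda$; and the sign character relates conjugate irreducibles by $\chi^{\lambda'}(\nu) = \operatorname{sgn}(\nu)\chi^\lambda(\nu)$. Since a permutation of cycle type $\nu$ decomposes into $\ell(\nu)$ cycles of lengths $\nu_i$ with signs $(-1)^{\nu_i-1}$, the total sign is $\operatorname{sgn}(\nu) = (-1)^{n-\ell(\nu)}$. Hence $v^{\lambda'}(\nu) = (-1)^{n-\ell(\nu)}v^\lambda(\nu)$ is an eigenvector of $A_n$ with eigenvalue $-\eta_\lambda$, which is exactly (c); statement (b) is then immediate. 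For (a), any self-conjugate partition $\lambda = \lambda'$ gives $\eta_\lambda = 0$, and such partitions of $n$ exist for all $n\geq 3$ (for instance the hook $((n+1)/2,1^{(n-1)/2})$ when $n$ is odd, and shapes like $(2,2)$, $(3,2,1)$, $(4,2,1,1)$, \ldots\ for even $n\geq 4$). The case $n=2$ is an actual exception: $A_2 = \bigl(\begin{smallmatrix}0&1\\1&0\end{smallmatrix}\bigr)$ has eigenvalues $\pm 1$ only, so the hypothesis in (a) should read $n\geq 3$.

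The main obstacle is the appeal to the Frobenius content-sum formula for central characters on transpositions; everything else in the argument is formal. Since the paper later develops symmetric-function and border-strip-tableau machinery, an attractive alternative is to reprove this formula (or its consequence for the eigenvalues of $A_n$) internally to that framework, so that (a)--(c) become a self-contained consequence of the paper's own tools rather than of external representation theory.
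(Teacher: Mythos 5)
Your argument is correct, but it takes a genuinely different route from either of the paper's two proofs. You realize $A_n$ as the matrix of multiplication by the class sum of transpositions on the center of $\CC[S_n]$, so that the spectrum is the set of central characters, and you then import the Frobenius content-sum formula $\eta_\lambda=\sum_{c\in\lambda}\operatorname{content}(c)$ together with $\chi^{\lambda'}=\operatorname{sgn}\cdot\chi^{\lambda}$. The paper explicitly avoids the character-theoretic interpretation: its first proof of all three parts is purely graph-theoretic (the partition graph is bipartite with respect to the parity of $\ell(\nu)$, which gives the $\pm\rho$ pairing and the sign pattern of eigenvectors at once, while part (a) follows from the Glaisher/Euler fact that for $n>2$ the numbers of partitions with an even and an odd number of parts differ); its second proof obtains the same spectral data as yours --- eigenvectors $(\chi^\lambda(\nu))_\nu$ with eigenvalue $\rho_\lambda$ equal to the content sum --- but derives it from Macdonald's operator $D^\ast$ acting on power-sum symmetric polynomials, with the conjugation symmetry coming from the involution $\omega$. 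Your route is shorter and identifies eigenvalues and eigenvectors in one stroke, at the cost of using the Frobenius formula as a black box (you rightly flag this, and it could indeed be replaced by the paper's Propositions~\ref{Proposition:Eigenvectors-A} and \ref{Prop:Rho-row-column}); the paper's routes stay self-contained within elementary graph theory and symmetric functions, which the later sections need anyway. One substantive point in your favor: the hypothesis ``$n\geq 2$'' in part (a) is an error in the statement --- $A_2$ has eigenvalues $\pm1$ only --- and the paper's own argument, which rests on Proposition~\ref{Prop:Aigner} and hence requires $n>2$, confirms that the correct hypothesis is $n\geq 3$, exactly as you observe.
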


Since four of the authors of this paper are undergraduate students, who worked on different aspects of the project, we obtained several proofs of the preceding proposition. Some of them are done in the general context of weighted adjacency matrices of graphs, while others follow from the explicit combinatorial description of the structure of the matrix $A_n$ in Proposition~\ref{Proposition:Eigenvectors-A} below.

Next, we explicitly diagonalize the matrix $A_n$. In the following proposition the coefficients $\chi^\lambda(\nu)$ arise from expanding Schur polynomials in the power sum symmetric polynomials; they can also be interpreted as character values (an approach we do not use), and have explicit combinatorial expressions $\chi^\lambda(\nu) = \sum_T(-1)^{ht(T)}$ in terms of border strip tableaux $T$ (of shape $\lambda$ filled with $\nu$), see Section~\ref{Section:Border-strip}.

\begin{proposition}
\label{Proposition:Eigenvectors-A}
For each $\lambda$, the vector $(\chi^\lambda(\nu))_{\nu \in P(n)}$ is an eigenvector of the matrix $A_n$ with eigenvalue
\begin{equation}
\label{Equation:Rho}
2 \rho_\lambda = \sum_i \lambda_i (\lambda_i - 2 i + 1).
\end{equation}
\end{proposition}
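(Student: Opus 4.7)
The plan is to recognize the matrix $A_n$, via the classical dictionary between conjugacy classes of $S_n$ and symmetric polynomials, as the matrix in the power-sum basis $\{p_\nu\}$ of the \emph{cut-and-join} differential operator
\[
M \;=\; \frac{1}{2}\sum_{i,j\ge 1}\Bigl((i+j)\,p_i p_j\,\frac{\partial}{\partial p_{i+j}} \;+\; ij\,p_{i+j}\,\frac{\partial^2}{\partial p_i\,\partial p_j}\Bigr),
\]
and then to invoke the classical diagonalization of $M$ on the Schur basis. Combinatorially, multiplying a permutation of cycle type $\nu$ by a transposition either splits one cycle of length $i+j$ into two cycles of lengths $i,j$ (a ``cut,'' matching the first summand of $M$) or merges two cycles of lengths $i,j$ into a single cycle of length $i+j$ (a ``join,'' matching the second). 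Using $\partial p_\nu/\partial p_k = m_k(\nu)\,p_\nu/p_k$ (where $m_k(\nu)$ is the multiplicity of $k$ in $\nu$) and separately treating the diagonal case $i=j$, a direct calculation yields the identity
\[
M(p_\nu)\;=\;\sum_\sigma (A_n)_{\nu\sigma}\,p_\sigma,
\]
which can be sanity-checked against the displayed matrix $A_4$.

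The heart of the argument is the eigenvalue equation $M(s_\lambda)=\rho_\lambda\, s_\lambda$, where $\rho_\lambda = \sum_{(i,j)\in\lambda}(j-i)$ is the content sum of the Young diagram of $\lambda$; the equivalent formula $2\rho_\lambda = \sum_i\lambda_i(\lambda_i-2i+1)$ of the proposition is then a one-line row-by-row computation. To prove this eigenvalue relation I would expand $s_\lambda = \sum_\nu \chi^\lambda(\nu)\,p_\nu/z_\nu$ and use the Murnaghan--Nakayama interpretation $\chi^\lambda(\nu) = \sum_T (-1)^{\mathrm{ht}(T)}$ developed in Section~\ref{Section:Border-strip}: each term in $M(p_\nu)$, via the identification above, corresponds to removing a border strip of size two from a tableau of shape $\lambda$ and reinserting another one. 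A sign-reversing involution pairing ``cut then reinsert'' with ``insert then cut'' cancels all off-diagonal contributions, leaving the diagonal terms whose total is the content sum of $\lambda$. (Equivalently, one may invoke the classical identity proven via Jacobi--Trudi or Cauchy's identity.)

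Finally, to translate the eigenvalue equation back to $A_n$, substitute $s_\lambda = \sum_\nu \chi^\lambda(\nu)\,p_\nu/z_\nu$ into $M(s_\lambda)=\rho_\lambda s_\lambda$, apply the operator identification from the first step term by term, and equate coefficients of $p_\sigma$; this yields
\[
\sum_\nu (A_n)_{\nu\sigma}\,\chi^\lambda(\nu)/z_\nu \;=\; \rho_\lambda\,\chi^\lambda(\sigma)/z_\sigma.
\]
The symmetry $(A_n)_{\nu\sigma}\,z_\sigma = (A_n)_{\sigma\nu}\,z_\nu$, a direct consequence of the explicit formulas in Proposition~\ref{Proposition:Transition-matrix} (and equivalent to the commutativity of the class algebra of $S_n$), rearranges this to $\sum_\nu (A_n)_{\sigma\nu}\,\chi^\lambda(\nu) = \rho_\lambda\,\chi^\lambda(\sigma)$, the desired eigenvector identity. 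The principal obstacle is the middle step: diagonalizing the cut-and-join operator on Schur polynomials is a classical but non-trivial result, and is where the border strip tableaux and differential-operator techniques advertised in the abstract do the real work. A secondary, more clerical obstacle lies in the initial identification, where the factor of $\tfrac12$ in $M$, the multiplicities $m_k(\nu)$, and the diagonal case $i=j$ must all be carefully reconciled with the transposition counts encoded in $A_n$.
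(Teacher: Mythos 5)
Your proposal is correct in outline but follows a genuinely different (though structurally parallel) route. The paper identifies $A_n^T$ (up to the additive scalar $n(N-1)$) with Macdonald's operator $D^\ast$ written in the underlying variables $x_1,\ldots,x_N$, carries out a long explicit computation of $D^\ast(p_\lambda)$ to match the entries of Proposition~\ref{Proposition:Transition-matrix}, \emph{cites} the diagonalization of $D^\ast$ on Schur polynomials (Theorem~\ref{theorem:Schur-definition}, from \cite{Dumitriu-MOPS}), and then transfers the eigenvectors from $A_n^T$ to $A_n$ via the dual-basis Lemmas~\ref{Lemma:Dual-bases} and~\ref{Lemma:Transpose}. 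You instead work with the cut-and-join operator $M$ directly in the variables $p_i$, $\partial/\partial p_i$ (which is essentially the final displayed form the paper's computation of $D^\ast(p_\lambda)$ arrives at, and is Goulden's operator from \cite{Goulden-Differential}), and you replace the dual-basis argument by the detailed-balance symmetry $(A_n)_{\nu\sigma} z_\sigma = (A_n)_{\sigma\nu} z_\nu$; that symmetry is equivalent to $A_n = Z A_n^T Z^{-1}$ with $Z = \operatorname{diag}(z_\nu)$, so it buys exactly what the paper's duality lemmas buy. Your version is arguably cleaner in that it never leaves the power-sum algebra and makes the connection to the class algebra of $S_n$ transparent.

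The one step I would not accept as written is your proposed proof of the central identity $M(s_\lambda) = \rho_\lambda s_\lambda$ by a sign-reversing involution on border strip tableaux. The cut-and-join operator does not merely ``remove a border strip of size two and reinsert another one'': the cut term replaces a single border strip of length $i+j$ in a tableau of type $\nu$ by two strips of lengths $i$ and $j$ (and the join term does the reverse), so the required cancellation involves comparing Murnaghan--Nakayama expansions across all such splittings, which is a substantial combinatorial argument that your two-sentence sketch does not supply. However, your fallback --- invoking the classical eigenvalue identity (equivalently, that the class sum of transpositions is central and acts on the irreducible indexed by $\lambda$ by the content sum $\sum_{(i,j)\in\lambda}(j-i)$, via Frobenius's formula) --- is no weaker than what the paper itself does, since the paper also cites rather than proves the corresponding diagonalization. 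With that substitution your argument is complete; the clerical checks you flag (the factor $\tfrac12$, the multiplicities, the $i=j$ case) are precisely the content of the paper's comparison with Proposition~\ref{Proposition:Transition-matrix} and do need to be done, but they are routine.
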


As a consequence, we obtain our main result.

\begin{theorem}
\label{theorem;Main}
Let $\mu \in P(n)$ be a partition. Then
\begin{enumerate}
\item
\[
c_k(\mu) = \frac{1}{n!} \sum_{\lambda \in P(n)} \chi_{\lambda, \mu} \rho_\lambda^k,
\]
where $\chi_{\lambda, \mu} = \chi^\lambda(1^n) \chi^\lambda(\mu)$ are certain integer coefficients:  $\chi^\lambda(1^n)$ is the number of standard Young tableaux of shape $\lambda$, and $\chi^\lambda(\mu)$ can be computed in terms of border strip tableaux.
\item
$\rho_{\lambda'} = - \rho_\lambda$, where $\lambda'$ is the conjugate partition to $\lambda$; in particular, if $\lambda$ is self-conjugate then $\rho_\lambda = 0$.
\item
$\chi_{\lambda', \mu} = (-1)^{n - \ell(\mu)} \chi_{\lambda, \mu}$.
\item
If
\[
f_\mu(z) = \sum_{k=0}^\infty \frac{1}{k!} c_k(\mu) z^k
\]
is the generating function of the $c_k$'s, then
\[
\begin{split}
f_\mu(z)
& = \sum_{\lambda \in P(n)} \chi_{\lambda, \mu} \exp(\rho_\lambda z) \\
& =
\begin{cases}
\sum_{\lambda \in P(n)} \chi_{\lambda, \mu} \cosh(\rho_\lambda z) = \sum_{\lambda \prec \lambda'} 2 \chi_{\lambda, \mu} \cosh(\rho_\lambda z), & n - \ell(\lambda) \text{ even}, \\
\sum_{\lambda \in P(n)} \chi_{\lambda, \mu} \sinh(\rho_\lambda z) = \sum_{\lambda \prec \lambda'} 2 \chi_{\lambda, \mu} \sinh(\rho_\lambda z), & n - \ell(\lambda) \text{ odd}.
\end{cases}
\end{split}
\]
where $\prec$ is the lexicographic ordering.
\end{enumerate}
\end{theorem}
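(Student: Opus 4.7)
The plan is to derive all four parts directly from the diagonalization supplied by Proposition~\ref{Proposition:Eigenvectors-A}, together with two classical facts about the character table of $S_n$: row orthogonality of irreducible characters, and the identity $\chi^{\lambda'}=\mathrm{sgn}\otimes\chi^\lambda$.

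For part (a), I would start by recording, as explained just after Definition~\ref{Definition:A_n}, that $c_k(\mu)$ is the $\mu$-entry of $A_n^k e_{1^n}$. Let $P$ be the matrix with entries $P_{\nu\lambda}=\chi^\lambda(\nu)$ and $D$ the diagonal matrix with entries $\rho_\lambda$; by Proposition~\ref{Proposition:Eigenvectors-A}, $A_nP=PD$, so $A_n^k=PD^kP^{-1}$. Row orthogonality of the irreducible characters of $S_n$ gives $(P^{-1})_{\lambda\nu}=(|C_\nu|/n!)\,\chi^\lambda(\nu)$; specializing to $\nu=1^n$ yields $(P^{-1})_{\lambda,1^n}=\chi^\lambda(1^n)/n!$, and hence
\[
c_k(\mu)=\sum_{\lambda}\chi^\lambda(\mu)\,\rho_\lambda^k\,(P^{-1})_{\lambda,1^n}=\frac{1}{n!}\sum_{\lambda}\chi_{\lambda,\mu}\,\rho_\lambda^k,
\]
which is (a).

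For (b), I would rewrite the explicit expression as a sum of contents, $2\rho_\lambda=\sum_i\lambda_i(\lambda_i-2i+1)=2\sum_{(i,j)\in\lambda}(j-i)$; the involution $(i,j)\leftrightarrow(j,i)$ induced by conjugation negates each summand, so $\rho_{\lambda'}=-\rho_\lambda$. For (c), I would invoke $\chi^{\lambda'}(\nu)=(-1)^{n-\ell(\nu)}\chi^\lambda(\nu)$ (tensoring the irreducible associated to $\lambda$ with the sign representation); since $(-1)^{n-\ell(1^n)}=1$, the factor $\chi^\lambda(1^n)$ is preserved, giving $\chi_{\lambda',\mu}=(-1)^{n-\ell(\mu)}\chi_{\lambda,\mu}$. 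Both (b) and (c) also fall out of Proposition~\ref{Proposition:A-symmetries}(c) once its eigenvectors are identified with those of Proposition~\ref{Proposition:Eigenvectors-A}, since the sign-twisted eigenvector of eigenvalue $-\rho_\lambda$ then coincides with $(\chi^{\lambda'}(\nu))_\nu$.

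For (d), substitute (a) into the defining series of $f_\mu$ and interchange the sums over $k$ and $\lambda$; the inner sum is the Taylor expansion of the exponential, producing $f_\mu(z)=(1/n!)\sum_\lambda\chi_{\lambda,\mu}\exp(\rho_\lambda z)$. To pass to hyperbolic functions, I would pair each non-self-conjugate $\lambda\prec\lambda'$ with its conjugate and apply (b) and (c) to collapse the two terms into $\chi_{\lambda,\mu}\bigl(e^{\rho_\lambda z}+(-1)^{n-\ell(\mu)}e^{-\rho_\lambda z}\bigr)$, which equals $2\chi_{\lambda,\mu}\cosh(\rho_\lambda z)$ or $2\chi_{\lambda,\mu}\sinh(\rho_\lambda z)$ according to the parity of $n-\ell(\mu)$; self-conjugate $\lambda$ contribute $\chi_{\lambda,\mu}$ since $\rho_\lambda=0$, which folds into the cosh sum in the even case and vanishes from the sinh sum in the odd case, matching the claimed parity dichotomy. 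The only step requiring any real care is the identification of $(P^{-1})_{\lambda,1^n}$ in part (a), but that is immediate from Schur orthogonality and is the principal technical ingredient; everything else is algebraic bookkeeping once the eigenstructure and the sign-twist symmetry are in hand.
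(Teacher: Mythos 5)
Your argument is correct and follows essentially the same route as the paper: your identification of $(P^{-1})_{\lambda,1^n}$ via character orthogonality is exactly the content of Lemma~\ref{Lemma:Dual-bases} (which the paper derives from the Schur/power-sum expansions rather than from representation theory), your content-sum computation for (b) is Proposition~\ref{Prop:Rho-row-column}, and your sign-twist argument for (c) coincides with the paper's second proof via the involution $\omega$. One remark: your version of part (d), with the factor $\frac{1}{n!}$ retained and the parity dichotomy governed by $n-\ell(\mu)$ rather than $n-\ell(\lambda)$, is the correct statement --- the theorem as printed omits the $\frac{1}{n!}$ and misprints the parity condition, and the paper supplies no explicit proof of (d), so your derivation fills that gap.
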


\begin{example}
Let $\mu = n$, so that the corresponding permutation consists of a single cycle. Then using the ideas from Section~\ref{Section:Border-strip},
we see that $\chi^\lambda(\mu)$ is the sum over border strip tableaux of shape $\lambda$ all of whose entries are the same. Thus, it is non-zero only if $\lambda$ itself is a border strip, and so a hook, in which case $\chi^\lambda(\mu) = (-1)^{\ell(\lambda) - 1}$. In this case also $\chi^\lambda(1^n)$, the number of standard Young tableaux of shape $\lambda$, is easily seen to be $\binom{n - 1}{\ell(\lambda) - 1}$. Finally, for a hook $\lambda$,
\[
\rho_\lambda = \frac{1}{2} (n - \ell(\lambda) + 1)(n - \ell(\lambda)) + \sum_{i=2}^{\ell(\lambda)} (1 - i)
= \frac{1}{2} (n^2 - n) - n (\ell(\lambda) - 1).
\]
Thus we recover formula~\eqref{Goulden}.
\end{example}

Further explicit formulas are obtained at the very end of the article.

\textbf{Acknowledgements.} We are grateful to Richard Stanley for pointing out to us reference \cite{Goulden-Differential}, which served as the starting point for our investigations. We would also like to thank Amudhan Krishnaswamy-Usha and Laura Matusevich for helpful comments, and Michael Brannan and Ken Dykema for their support during the project.


\section{Background}




For a partition $\lambda \in P(n)$, we denote by $\ell(\lambda)$ its number of parts. We call the partition \emph{even} if it has an even number of parts, and \emph{odd} if it has an odd number of parts.

For a permutation $\alpha \in S_n$ and a partition $\lambda \in P(n)$, we say that $\alpha$ has \emph{cycle type $\lambda$} if the sizes of cycles in $\alpha$ are precisely the parts of $\lambda$. Recall that if we follow \cite{Stanley-volume-2} and define, for a partition
\[
\lambda = 1^{k_1} 2^{k_2} \ldots,
\]
\[
z_\lambda = 1^{k_1} 1! 2^{k_2} 2! \ldots,
\]
then the number of permutations of cycle type $\lambda$ is $\frac{n!}{z_\lambda}$.

\begin{lemma}
Two permutations lie in the same conjugacy class of $S_n$ if and only if they have the same cycle type.
\end{lemma}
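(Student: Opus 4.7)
The plan is to prove both directions by unpacking the effect of conjugation on cycle notation, using the single computational identity
\[
(\sigma \alpha \sigma^{-1})(\sigma(i)) = \sigma(\alpha(i)) \quad \text{for all } i \in \{1, \ldots, n\},
\]
which follows immediately from evaluating the left-hand side.

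For the forward direction, I would apply this identity to a cycle $(a_1\, a_2\, \ldots\, a_k)$ of $\alpha$ to conclude that $(\sigma(a_1)\, \sigma(a_2)\, \ldots\, \sigma(a_k))$ is a cycle of $\sigma \alpha \sigma^{-1}$ of the same length. Since $\sigma$ is a bijection, cycles of $\alpha$ correspond bijectively to cycles of $\sigma\alpha\sigma^{-1}$ with lengths preserved, so the cycle types agree.

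For the converse, given two permutations $\alpha, \beta \in S_n$ sharing cycle type $\lambda$, I would write out their disjoint cycle decompositions (including fixed points as $1$-cycles), pairing up cycles of the same length:
\[
\alpha = (a_1^{(1)}\, \ldots\, a_{\lambda_1}^{(1)})(a_1^{(2)}\, \ldots\, a_{\lambda_2}^{(2)}) \cdots, \qquad
\beta = (b_1^{(1)}\, \ldots\, b_{\lambda_1}^{(1)})(b_1^{(2)}\, \ldots\, b_{\lambda_2}^{(2)}) \cdots.
\]
Then I would define $\sigma \in S_n$ by $\sigma(a_j^{(i)}) = b_j^{(i)}$; this is a well-defined bijection because each of $\alpha$'s cycle decomposition and $\beta$'s cycle decomposition exhausts $\{1, \ldots, n\}$ exactly once. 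The forward direction's identity then gives $\sigma \alpha \sigma^{-1} = \beta$ on the nose.

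I don't expect any real obstacle. The only mild care needed is the convention about including $1$-cycles in the decomposition (so that the pairing exhausts $\{1,\ldots,n\}$) and noting that the cyclic ambiguity in writing each cycle, together with the ambiguity in ordering cycles of equal length, does not affect the construction of $\sigma$ — different choices simply produce different conjugating elements, all of which work.
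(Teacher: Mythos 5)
Your proposal is correct and rests on the same key identity the paper uses, namely that conjugation by $\sigma$ relabels each cycle $(a_1\,\ldots\,a_k)$ of $\alpha$ as $(\sigma(a_1)\,\ldots\,\sigma(a_k))$; the paper states this relabeling fact and leaves the converse construction of the conjugating element implicit, which you simply spell out. No substantive difference in approach.
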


\begin{proof}
It suffices to note that if $\alpha$ has the cycle structure
\[
\alpha = \Bigl(u_1(1) \ldots u_1(\lambda_1) \Bigr) \ \Bigl(u_2(1) \ldots u_2(\lambda_2) \Bigr) \ \ldots \ \Bigl(u_j(1) \ldots u_j(\lambda_j) \Bigr)
\]
and $\gamma$ is any permutation, then
\[
\gamma \alpha \gamma^{-1} = \Bigl(\gamma(u_1(1)) \ldots \gamma(u_1(\lambda_1)) \Bigr) \ \Bigl(\gamma(u_2(1)) \ldots \gamma(u_2(\lambda_2)) \Bigr) \ \ldots \ \Bigl(\gamma(u_j(1)) \ldots \gamma(u_j(\lambda_j))\Bigr). \qedhere
\]
\end{proof}

\begin{lemma}
\label{Lemma:c_k}
If $\alpha$ and $\beta$ lie in the same conjugacy class of $S_n$, then $c_k(\alpha) = c_k(\beta)$. Thus we may, and will, consider $c_k$ as a function of a partition $\mu \in P(n)$.
\end{lemma}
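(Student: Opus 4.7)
The plan is to exhibit an explicit bijection between $k$-transposition factorizations of $\alpha$ and those of $\beta$, implemented by simultaneous conjugation. Since $\alpha$ and $\beta$ are conjugate in $S_n$, there exists $\gamma \in S_n$ with $\beta = \gamma \alpha \gamma^{-1}$. Fix such a $\gamma$.

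First, given any factorization $\alpha = \pi_1 \pi_2 \cdots \pi_k$ into transpositions, I would define $\sigma_i = \gamma \pi_i \gamma^{-1}$ for each $i$. By the preceding lemma (or equivalently by the standard fact that conjugation preserves cycle type), each $\sigma_i$ is again a transposition. Multiplying telescopes the $\gamma$'s:
\[
\sigma_1 \sigma_2 \cdots \sigma_k = \gamma \pi_1 \gamma^{-1} \gamma \pi_2 \gamma^{-1} \cdots \gamma \pi_k \gamma^{-1} = \gamma (\pi_1 \pi_2 \cdots \pi_k) \gamma^{-1} = \gamma \alpha \gamma^{-1} = \beta,
\]
so $(\sigma_1, \ldots, \sigma_k)$ is a valid $k$-transposition factorization of $\beta$.

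Next, I would observe that the map $(\pi_1, \ldots, \pi_k) \mapsto (\gamma \pi_1 \gamma^{-1}, \ldots, \gamma \pi_k \gamma^{-1})$ is a bijection from the set counted by $c_k(\alpha)$ to the set counted by $c_k(\beta)$: its two-sided inverse is conjugation by $\gamma^{-1}$, which by the same argument sends $k$-transposition factorizations of $\beta$ to $k$-transposition factorizations of $\alpha$. Hence $c_k(\alpha) = c_k(\beta)$.

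There is no real obstacle here; the only thing to verify carefully is that conjugation of a transposition is a transposition and that the componentwise map is its own inverse up to swapping $\gamma$ and $\gamma^{-1}$, both of which are immediate. The last sentence of the lemma, that $c_k$ descends to a function on $P(n)$, then follows because conjugacy classes of $S_n$ are indexed by partitions of $n$, as established in the previous lemma.
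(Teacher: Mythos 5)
Your proposal is correct and is essentially the paper's own argument: both conjugate each factor by $\gamma$, use the telescoping product to turn a factorization of $\alpha$ into one of $\beta$, and note that conjugation preserves the property of being a transposition, with conjugation by $\gamma^{-1}$ furnishing the inverse. You merely spell out the bijection more explicitly than the paper does; no substantive difference.
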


\begin{proof}
If $\alpha = \gamma \beta \gamma^{-1}$, then
\[
\beta = \pi_1 \pi_2 \ldots \pi_k
\]
is equivalent to
\[
\alpha = (\gamma \pi_1 \gamma^{-1}) (\gamma \pi_2 \gamma^{-1}) \ldots (\gamma \pi_k \gamma^{-1}),
\]
and each $\gamma \pi_i \gamma^{-1}$ has the same cycle type as $\pi_i$.
\end{proof}

\begin{lemma}
\label{Lemma:Split}
Let $\pi\in S_n$ be a transposition,
and $\alpha\in S_n$ be a permutation.
Take $\pi\alpha$.
If $\pi = (i,j)$, and $i$ and $j$ are both in the same cycle of $\alpha$, this cycle gets cut into two cycles.
If $i,j$ are in different cycles, they get glued together into a single cycle.
\end{lemma}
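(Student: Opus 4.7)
The plan is a direct, case-based computation using cycle notation. The only subtlety is that $\pi\alpha$ agrees with $\alpha$ away from the orbit(s) of $i$ and $j$, so the statement is really about how the action of $\pi = (i,j)$ restricts to those orbits; everything else is preserved. I would therefore reduce the problem to analyzing what happens on the cycle(s) containing $i$ and $j$.

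In the first case, where $i$ and $j$ belong to the same cycle of $\alpha$, I would write that cycle as
\[
(i, a_1, \ldots, a_r, j, b_1, \ldots, b_s),
\]
meaning $\alpha(i)=a_1$, $\alpha(a_r)=j$, $\alpha(j)=b_1$, $\alpha(b_s)=i$ (allowing $r$ or $s$ to be $0$). Applying $\pi = (i,j)$ on the left and using that $\pi$ fixes each $a_m$ and $b_m$, one computes
\[
\pi\alpha(i)=a_1,\ \ldots,\ \pi\alpha(a_r)=\pi(j)=i, \qquad \pi\alpha(j)=b_1,\ \ldots,\ \pi\alpha(b_s)=\pi(i)=j,
\]
so the single cycle of $\alpha$ splits into the two cycles $(i, a_1, \ldots, a_r)$ and $(j, b_1, \ldots, b_s)$ in $\pi\alpha$.

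In the second case, where $i$ and $j$ lie in distinct cycles, I would write them as $(i, a_1, \ldots, a_r)$ and $(j, b_1, \ldots, b_s)$. The analogous pointwise computation gives
\[
\pi\alpha(a_r) = \pi(i) = j, \qquad \pi\alpha(b_s) = \pi(j) = i,
\]
while $\pi\alpha$ agrees with $\alpha$ on all $a_m$ with $m < r$ and all $b_m$ with $m < s$, and also on $i$ and $j$ themselves. Tracing the orbit shows that the two cycles of $\alpha$ are merged into the single cycle $(i, a_1, \ldots, a_r, j, b_1, \ldots, b_s)$ in $\pi\alpha$.

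I do not expect any serious obstacle: the only thing to watch for is the degenerate situations $r=0$ or $s=0$ (i.e., $\alpha(i)=j$ or $\alpha(j)=i$, or $i$ or $j$ a fixed point), which are handled by the same formulas with empty sequences. The statement that $\pi\alpha$ agrees with $\alpha$ outside these orbits is immediate since $\pi$ is the identity off $\{i,j\}$, so no separate argument is needed for the remaining cycles.
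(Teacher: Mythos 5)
Your proposal is correct and follows essentially the same route as the paper's own proof: write out the cycle(s) containing $i$ and $j$ in cycle notation, apply $\pi=(i,j)$ on the left, and track where each orbit closes up to see the split or the merge. Your explicit remarks about the degenerate cases ($r=0$ or $s=0$) and about $\pi\alpha$ agreeing with $\alpha$ off the relevant orbits are minor additions the paper leaves implicit.
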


\begin{proof}
Say $\pi=(i,j)$ and
\[
\alpha=(i,\alpha_{1,2},\dots,\alpha_{1,x})(j,\alpha_{2,2},\dots,\alpha_{2,y}) \ldots (\ldots).
\]
Then $\pi\alpha(\alpha_{1,x})=\pi\big(\alpha(\alpha_{1,x})\big)=\pi(i)=j$
and $\pi\alpha(\alpha_{2,y})=\pi(j)=i$.
Therefore,
\[\pi\alpha=(i,\alpha_{1,2},\dots,\alpha_{1,x},j,\alpha_{2,2},\dots,\alpha_{2,y}) \ldots (\ldots),\]
and we see the two cycles are glued together.

Now say $\pi=(i,j)$ and $\alpha=(i, \alpha_2, \ldots, \alpha_x, j, \alpha_{x+2},\dots,\alpha_y) \ldots (\ldots)$.
Then $\pi\alpha(i)= \pi(\alpha_2) = \alpha_2$, $\pi \alpha (\alpha_x) = \pi(j) = i$, $\pi \alpha(j) = \pi(\alpha_{x+2}) = \alpha_{x+2}$, and $\pi \alpha (\alpha_y) = \pi(i) = j$. Thus \[\pi\alpha=(i,\alpha_2, \ldots, \alpha_x)(j,\alpha_{x+2},\ldots,\alpha_y) \]
and the single cycle is split into two, their sized determined by the relative position of $i$ and $j$ in the cycle of $\alpha$.
\end{proof}



\section{Main results}

\subsection{Transposition formulas and Integer partition Matrix}

\begin{proposition}
\label{Proposition:Transition-matrix}
Note that for a permutation $\alpha$ with cycle type $\lambda$, the number of transposition taking $\alpha$ to a permutation of cycle type $\sigma$ depends only on $\lambda$ and not on $\alpha$. The entries $A_{\lambda \sigma}$ of the transition matrix from Definition~\ref{Definition:A_n} are zero unless $\lambda$ and $\sigma$ coincide, except one term in $\lambda$ is split into two in $\sigma$, or vice versa. More precisely, let
\[
\lambda = 1^{k_1} 2^{k_2} \ldots n^{k_n}.
\]
Then we have four cases when the entry $A_{\lambda \sigma} \neq 0$:
\begin{enumerate}
\item[Case 1.] We are splitting a cycle of size $m$ into two cycles of different lengths $i \neq j$, $i + j = m$.
\item[Case 2.] We are splitting a cycle of size $m$ into two cycles of the same length $i$, $2i = m$.
\item[Case 3.] We are combining two cycles of the same length $i$ into one cycle of size $m = 2i$.
\item[Case 4.] We are combining two cycles of different lengths $i$ and $j$ into one cycle of size $m = i + j$.
\end{enumerate}
Then the entries of the matrix $A_n$ corresponding to these four cases are
\begin{center}
\begin{tabular}{ |c|c| }
 \hline
 Case 1 & $ij(k_i+1)(k_j +1)$.  \\
 Case 2 & $\frac{i^2(k_i+1)(k_i+2)}{2}$  \\
 Case 3 & $\frac{m(k_m+1)}{2} = i(k_m+1)$\\
 Case 4 & $(i+j)(k_m+1)$ \\
 \hline
\end{tabular}
\end{center}
\end{proposition}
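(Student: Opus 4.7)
The plan is threefold: (i) reduce the independence-of-$\alpha$ claim to the conjugation argument of Lemma~\ref{Lemma:c_k}; (ii) use Lemma~\ref{Lemma:Split} to see that a single transposition alters the cycle type of $\alpha$ only by splitting one cycle into two or merging two cycles into one, forcing $A_{\lambda\sigma}=0$ outside the four listed cases; and (iii) in each of those four cases, directly enumerate the transpositions effecting the stated change.

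Step (i) is immediate, since $\pi\mapsto\gamma\pi\gamma^{-1}$ is a type-preserving bijection on transpositions and intertwines $\pi\mapsto\pi\alpha$ with $\pi\mapsto\pi(\gamma\alpha\gamma^{-1})$. Step (ii) is the dichotomy of Lemma~\ref{Lemma:Split}: either $\pi=(a,b)$ has $a,b$ in the same cycle of $\alpha$ (producing a split) or in different cycles (producing a merge), so no other $\lambda\to\sigma$ transitions have any witnesses; in particular $\sigma$ must coincide with $\lambda$ except in the affected cycle sizes.

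For step (iii), I would fix a representative $\alpha$ of type $\lambda$, enumerate in whichever direction is more convenient, and match the stated formula by using the detailed-balance identity $\tfrac{n!}{z_\lambda}A_{\lambda\sigma}=\tfrac{n!}{z_\sigma}A_{\sigma\lambda}$ (which follows because $(\alpha,\pi)\mapsto(\pi\alpha,\pi)$ is an involution on ordered pairs, since $\pi$ itself is an involution). In Case~4 (merging an $i$-cycle and a $j$-cycle, $i\neq j$), the natural ``merge from $\lambda$'' count chooses an element from each of the $k_i$ cycles of size $i$ and the $k_j$ cycles of size $j$, giving $ij\,k_ik_j$; the dual ``split from $\sigma$'' count selects one of the $k_m+1$ cycles of length $m=i+j$ in $\sigma$ together with one of its $m$ pairs of positions at cyclic distance $i$ (equivalently $j$), yielding $(i+j)(k_m+1)$. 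Case~3 is analogous with $i=j$: an unordered pair of $i$-cycles contributes $\binom{k_i^\sigma}{2}=\tfrac12(k_i+1)(k_i+2)$, or equivalently, on the split side, the $\tfrac{m}{2}=i$ antipodal pairs of positions in a single $2i$-cycle give $i(k_m+1)$. Cases~1 and~2 are the reverse direction: for each of the $k_m^\lambda$ cycles of size $m$ in $\alpha$, Lemma~\ref{Lemma:Split} yields $m$ transpositions producing an unequal $(i,j)$-split (respectively $i$ transpositions producing an equal $(i,i)$-split), and after applying the detailed-balance identity the stated forms $ij(k_i+1)(k_j+1)$ and $\tfrac12 i^2(k_i+1)(k_i+2)$ drop out.

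The main obstacle is bookkeeping: distinguishing $k_i^\lambda$ from $k_i^\sigma$ (which differ by $\pm 1$ or $\pm 2$ in the affected slots), deciding whether to enumerate from $\lambda$ or from $\sigma$, and carrying the factor $\tfrac12$ correctly whenever two cycles of equal length are involved. Once this dictionary is in place, each of the four cases collapses to a short direct count from Lemma~\ref{Lemma:Split}.
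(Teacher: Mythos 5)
Your proposal is correct and follows essentially the same route as the paper's proof: Lemma~\ref{Lemma:Split} reduces the problem to the four cases, and each entry is then obtained by combining an easy direct count (split: $m$ or $i$ choices per $m$-cycle; merge: one element from each cycle) with the ratio of conjugacy class sizes, which is exactly your detailed-balance identity --- your involution $(\alpha,\pi)\mapsto(\pi\alpha,\pi)$ gives a cleaner justification of the ``scaling factor'' that the paper asserts here and only proves in a later proposition. The one blemish is the Case~3 sentence: $\binom{k_i^\sigma}{2}=\tfrac12(k_i+1)(k_i+2)$ actually pertains to the Case~2 entry (whose target has $k_i+2$ parts equal to $i$, and which needs an extra factor $i^2$), and it is not ``equivalent'' to $i(k_m+1)$ --- those are two distinct matrix entries related by detailed balance, not equal quantities --- though since you recover both correct formulas elsewhere in the sketch this is a slip of exposition rather than a gap.
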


\begin{proof}
If we have integer $n$, the matrix $A_n$, is a square matrix with dimension of the number of partitions of $n$. Each column is labeled with a partition, and each row as well, where the ordering of the partitions is (reverse) lexicographical. Each entry in the matrix represents the number of transpositions that will take the partition assigned to that column, to the partition assigned to that row.  According to Lemma~\ref{Lemma:Split}, this number is non-zero precisely in the four cases from the statement of the proposition.

We know we can calculate this number of transpositions, by taking the number of transpositions that take a given column partition to the row partition, and scaling this number by the number of permutations of the column type divided by the number of permutations of the row type. We derive four formulae to calculate these matrix entries for each of four cases.

In case 1 where we are splitting a cycle of length $m$ into two cycles of different lengths $i$ and $j$, we first find the number of possible permutations of the column type and the row type to be, respectively,

$$ \frac{n!}{1^{k_1}2^{k_2}...n^{k_n}k_1!k_2!...k_n!}$$

and

$$ \frac{n!}{...i^{k_1+2}j^{k_j+1}m^{k_m-1}(k_j+2)!(k_j+1)!...(k_m-1)!}.$$

Taking the ratio of these two quantities gives us our scaling factor.
$$\frac{ij(k_i+1)(k_j +1)m}{m k_m}.$$
Now we multiply by the number of transpositions that take the column type to the row type. This multiplying by $k_m$ gives us that the number of total transpositions is equal to $ij(k_i+1)(k_j +1)$.

In case 2 where we are splitting a cycle of length $m$, but instead with $i=j$ our formula only changes slightly to

$$\frac{k_mi^2(k_i+1)(k_i+2)}{m k_m}\frac{m}{2} = \frac{i^2(k_i+1)(k_i+2)}{2}.$$

For case 3 and 4 we want to know the number of transpositions that will glue two cycles of length $i$ and $j$ into one cycle of length $m$. We derive these formulae in the same manner. Our scaling factor (ratio of number of possible permutations of each cycle type), is derived by dividing

$$ \frac{n!}{1^{k_1}2^{k_2}...n^{k_n}k_1!k_2!...k_n!}$$

by $$\frac{n!}{i^{k_i-1}j^{k_j-1}m^{k_m+1}(k_i-1)!(k_j-1)!...(k_m+1)!}.$$

If $i$ does not equal $j$,
$$(i+j)(k_m+1).$$
If $i=j$,
$$ \frac{m(k_m+1)}{2} = i(k_m+1).$$
The result follows.
\end{proof}

\begin{example}
\label{Example:Matlab}
In Matlab, we wrote a program which will construct this matrix $A$, using the four formulas above. Upon diagonalizing the resulting matrix $A$, we find the eigenvalues of $A$ for each $n$. The following table gives the eigenvalues for the matrix $A$ for $n=3$ through $n=10$.

\begin{tabular}{r|rrrrrrrrrrrrrrrrr}
3&-3&0&3&&&&&&&&&&&&&& \\
4&-6&6&-2&2&0&&&&&&&&&&&& \\
5&-10&10&-5&5&-2&0&2&&&&&&&&&& \\
6&-15&15&-9&9&-5&5&0&-3&-3&3&3&&&&& \\
7&-21&21&-14&14&-9&9&-7&-6&7&6&-3&3&-1&1&0&& \\
8&-28&28&-20&20&-14&-12&14&-10&-8&-7&12&10&8&7&-2&2&0 \\
&-4&-4&4&4&&&&&&&&&&&&& \\
9&-36&36&-27&27&-20&-18&20&18&-15&15&-9&-8&9&8&-4&-3&4 \\
&3&-1&1&-12&-12&12&12&-6&-6&6&6&0&0&&&& \\
10&45&-45&35&-35&27&25&-27&-25&21&-21&18&17&-18&-17&13&11&10 \\
&9&7&-13&-11&-10&-9&-15&15&15&-7&-15&5&0&0&5&5&3 \\
&3&3&-5&-5&-5&-3&-3&-3
\end{tabular}

%
%

It should be noted that aside from the eigenvalue of zero, each eigenvalue comes as a pair with its negative. Furthermore, each set of eigenvalues contains at least one which is zero. Each set of eigenvalues is also bounded above and below by $n$ choose $2$. We also computed the corresponding eigenvectors, and observed their properties described in Proposition~\ref{Proposition:A-symmetries}, but do not include them here for brevity.
\end{example}


\subsection{Properties of the transition matrix}

Denote also by $t_{\lambda \mu}$ the number of transpositions taking a permutation of cycle type $\lambda$ to permutations of cycle type $\mu$.

\begin{proposition}
The matrix $A_n$, already seen to have entries $t_{\lambda \mu} \frac{z_\mu}{z_\lambda}$, is the transpose of the matrix with entries $t_{\lambda \mu}$. Consequently the columns of $A_n$ add up to $\binom{n}{2}$.\\
\end{proposition}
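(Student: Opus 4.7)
My plan is to reduce the proposition to the single algebraic identity
\[
t_{\lambda\mu}\,z_\mu \;=\; t_{\mu\lambda}\,z_\lambda.
\]
Granting this identity, the already-established formula $A_{\lambda\mu} = t_{\lambda\mu}\,z_\mu/z_\lambda$ rewrites immediately as $A_{\lambda\mu} = t_{\mu\lambda}$, which says exactly that $A_n$ is the transpose of the matrix $(t_{\lambda\mu})$. The column-sum statement then drops out at once: for fixed $\mu$, $\sum_\lambda A_{\lambda\mu} = \sum_\lambda t_{\mu\lambda}$, and for any fixed permutation $\beta$ of cycle type $\mu$ this sum partitions the set of all transpositions in $S_n$ according to the cycle type of $\pi\beta$, so it equals $\binom{n}{2}$.

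To prove the identity I would use a standard double-counting argument. Let $[\nu]$ denote the conjugacy class of cycle type $\nu$, so $|[\nu]| = n!/z_\nu$, and consider the set
\[
S \;=\; \bigl\{(\alpha,\pi) : \alpha\in[\lambda],\ \pi\text{ a transposition in } S_n,\ \pi\alpha\in[\mu]\bigr\}.
\]
Counting $|S|$ by choosing $\alpha$ first uses the class-invariance observation recorded at the start of Proposition~\ref{Proposition:Transition-matrix}: the number of admissible $\pi$ depends only on the cycle type of $\alpha$, and is therefore $t_{\lambda\mu}$ for each $\alpha\in[\lambda]$, giving $|S| = (n!/z_\lambda)\,t_{\lambda\mu}$. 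Counting by choosing $\beta := \pi\alpha \in [\mu]$ first, and using that $\pi$ is an involution so that $\alpha = \pi\beta$ is recovered from $(\beta,\pi)$, reduces the question to counting transpositions $\pi$ with $\pi\beta \in [\lambda]$ for each $\beta\in[\mu]$; this gives $|S| = (n!/z_\mu)\,t_{\mu\lambda}$. Equating the two expressions yields the identity.

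The only substantive step is the double counting, and I do not expect any obstacle there. The transpose relation and the column-sum corollary then follow in a line each. The only conceptual point worth flagging is that the relation ``from cycle type $\lambda$ to cycle type $\mu$'' is not symmetric in $\lambda$ and $\mu$, which is precisely what is corrected by the $z_\mu/z_\lambda$ factor appearing in the entries of $A_n$.
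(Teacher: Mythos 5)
Your proof is correct and follows essentially the same route as the paper's: both reduce the transpose claim to the identity $t_{\lambda\mu}\,(n!/z_\lambda) = t_{\mu\lambda}\,(n!/z_\mu)$ and establish it by double-counting pairs $(\alpha,\pi)$ with $\alpha$ of cycle type $\lambda$ and $\pi\alpha$ of cycle type $\mu$, using that transpositions are involutions. The column-sum corollary is then obtained in both arguments by observing that all $\binom{n}{2}$ transpositions applied to a fixed permutation of type $\mu$ are accounted for.
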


\begin{proof}
The sum of the number of transpositions that take a given permutation $\tau$ to each other permutation is exactly $\binom{n}{2}$. This is because we can construct exactly $\binom{n}{2}$ transpositions, and each of these will take $\tau$ to another permutation. \\
We would like to understand why the number of transpositions that take each permutation $\lambda$ to a given permutation $\tau$, also sum to $\binom{n}{2}$ after each is scaled by the total number of permutations of cycle type $\lambda$ divided by the total number of permutations of cycle type $\tau$. \\
The claim therefore is, that
$$ {\text{\# trans from type }\tau \text{ to type }\lambda} = {\text{\# trans from type }\lambda \text{ to type } \tau}\bigg( \frac{\text{\# perm of type }\lambda}{ \text{\# perm of type }\tau}\bigg)$$
And when we sum over all $\lambda$, both sides are equal to $\binom{n}{2}$.\\
Therefore we must prove that $$ \frac{\text{\# permutations of type }\lambda}{ \text{\# permutations of type }\tau} = \frac{\text{\# transpositions from type }\tau \text{ to type }\lambda}{\text{\# transpositions from type }\lambda \text{ to type } \tau}$$

Suppose we have an array $L$ of all permutations of type $\lambda$, and $T$ be an array of all permutations of type $\tau$.
$$ L = \{\lambda_1, \lambda_2, \lambda_3, .... \lambda_k\}$$
$$T = \{\tau_1, \tau_2, \tau_3, .... \tau_s\}$$
An arbitrary $\lambda_p$ is mapped to a set number of elements, $a$, of $T$ when acted upon by all possible transpositions. Since every $\lambda$ is a permutation of the same type, each $\lambda_p$ has the same number of transpositions that map it to elements of $T$.
These $a$ elements are mapped back to our particular $\lambda_p$ by the same transpositions. In other words, for every transposition that takes an element of $L$ to an element of $T$, that same transposition takes an element of $T$ to an element of $L$. And likewise, every transposition that takes an element of $T$ to an element of $L$ also takes an element of $T$ back to an element of $L$. Therefore the number of transpositions of type $\lambda$, $k$, multiplied by $a$, must be equal to the number of transpositions which take any element of $T$ to elements of $L$, call it $b$, multiplied by the number of elements of $T$. That is,
$$ a*k = b*s$$
and so
$$\frac{k}{s} = \frac{b}{a}.$$
Therefore we have as we want, that:
$$ \frac{\text{\# permutations of type }\lambda}{ \text{\# permutations of type }\tau} = \frac{\text{\# transpositions from type }\tau \text{ to type }\lambda}{\text{\# transpositions from type }\lambda \text{ to type } \tau}$$
\end{proof}

\begin{example}
EXAMPLE\\
Let us look at the integer partitions of $4$. We have five partitions:\\
1 1 1 1 \\
2 1 1\\
2 2\\
3 1\\
4\\
The number of transpositions from 2 1 1 to 1 1 1 1 is 1.\\
The number of transpositions from 2 1 1 to 3 1 is 4.\\
The number of transpositions from 2 1 1 to 2 2 is 1.\\
The number of transpositions from 2 1 1 to 4 is 0.\\
This adds up to 6, which is $\binom{n}{2}$ which we expect and understand.\\

On the other hand,\\
The number of transpositions from 1 1 1 1 to 2 1 1 is 6.\\
The number of transpositions from 3 1 to 2 1 1 is 3. \\
 The number of transpositions from 2 2 to 2 1 1 is 2.\\
 The number of transpositions from 4 to 2 1 1 is 0.\\
 These do not add up to $\binom{n}{2} = 6$, but once we scale each of these numbers by the number of possible permutations of each type divided by the number of permutations of type 2 1 1, we do again get 6:

$$ 6 \bigg( \frac{1}{6}\bigg) + 3\bigg(\frac{8}{6}\bigg) + 2*\bigg(\frac{3}{6}\bigg) = 1 + 4 + 1 = 6$$
\\
\end{example}

\section{Main results}

\subsection{Graph theory techniques}

\begin{definition}
A \emph{simple weighted directed graph} is an object $\Gamma = (V, E, f)$. Here $V$ is a set of vertices,
\[
E \subset \set{(a,b): a, b \in V}
\]
is a set of edges, the pair $(a,b)$ being thought of as the edge from $a$ to $b$, and
\[
f : E \rightarrow \mathbb{N}
\]
assigns to each edge an (integer) weight. To each such graph there corresponds a matrix $A_\Gamma$, which is square with row and columns indexed by $V$, and entries $A_{ab} = f((a,b))$ if $(a,b) \in E$ and zero otherwise.

A graph is \emph{bipartite} if we can decompose $V = V_1 \sqcup V_2$ so that the only edges $(a,b)$ satisfy $a \in V_1, b \in V_2$ or vice versa. Equivalently, the matrix of a bipartite graph can be decomposed as
\[
A =
\begin{pmatrix}
0 & X \\
Y & 0
\end{pmatrix},
\]
where $X, Y$ are rectangular matrices.

In particular, we call the \emph{partition graph} the graph with vertices $V = P(n)$, the set of partitions of $n$, and $(\lambda, \mu) \in E$ if there are permutations $\alpha$ of cycle type $\lambda$ and $\beta$ of cycle type $\mu$ such that $\beta = \lambda \tau$, for $\tau$ a transposition. We define the weight function on the partition graph as
\[
f((\lambda, \mu)) = t_{\lambda \mu} \frac{z_\mu}{z_\lambda}.
\]
As already observed, the matrix corresponding to this graph is precisely $A_n$.
\end{definition}

\begin{proposition}
\label{Prop:bipartite-zero}
Let $\Gamma$ be a graph with matrix $A_\Gamma$.

\begin{enumerate}
\item
Suppose that for every permutation $\sigma$ of $V$, there is a vertex $a \in V$ such that the pair $(a, \sigma(a))$ is not an edge. Then $A_\Gamma$ has a zero eigenvalue.
\item
Suppose $\Gamma$ is bipartite, with parts $V_1$ and $V_2$. If $\abs{V_1} \neq \abs{V_2}$, then  $A_\Gamma$ has a zero eigenvalue.
\end{enumerate}
\end{proposition}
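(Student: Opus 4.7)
The plan is to prove part (1) via the Leibniz expansion of the determinant and then derive part (2) as a corollary by a pigeonhole argument.

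For part (1), I would argue that under the hypothesis, $\det(A_\Gamma) = 0$, which is equivalent to having a zero eigenvalue. Concretely, the Leibniz formula gives
\[
\det(A_\Gamma) = \sum_{\sigma \in S_V} \mathrm{sgn}(\sigma) \prod_{a \in V} (A_\Gamma)_{a,\sigma(a)}.
\]
By hypothesis, for every permutation $\sigma$ of $V$ there exists some $a$ with $(a,\sigma(a)) \notin E$, so $(A_\Gamma)_{a,\sigma(a)} = 0$, and the corresponding product in the sum vanishes. Thus every term of the sum is zero, giving $\det(A_\Gamma) = 0$ and hence $0$ is an eigenvalue.

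For part (2), I would reduce to part (1) by showing the bipartite structure together with $|V_1| \neq |V_2|$ forces the hypothesis of (1) to hold. Let $\sigma$ be any permutation of $V = V_1 \sqcup V_2$. If, on the contrary, every $a \in V_1$ had $(a, \sigma(a)) \in E$, then by bipartiteness $\sigma(a) \in V_2$ for all $a \in V_1$, giving $\sigma(V_1) \subseteq V_2$ and hence $|V_1| \leq |V_2|$. Symmetrically, if every $a \in V_2$ had $(a,\sigma(a)) \in E$, we would get $|V_2| \leq |V_1|$. Under $|V_1| \neq |V_2|$, both cannot hold, so in fact at least one of the two parts contains a vertex $a$ with $\sigma(a)$ lying in the same part as $a$; for such $a$, $(a,\sigma(a)) \notin E$ by the definition of bipartite. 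This verifies the hypothesis of (1), and (2) follows.

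I do not expect any serious obstacle here: the content of (1) is essentially the permanent/determinant trick ("no system of distinct nonzero diagonal representatives implies singularity"), and (2) is a pigeonhole observation about permutations respecting a bipartition of unequal size. The only minor point to be careful about is that in the bipartite case the matrix $A_\Gamma$ need not be symmetric (the weights on $(a,b)$ and $(b,a)$ can differ, as in $A_n$), but the argument above only uses the support pattern of the matrix, so symmetry plays no role.
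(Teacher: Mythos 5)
Your proof is correct and follows essentially the same route as the paper: both parts rest on the Leibniz expansion of $\det(A_\Gamma)$, with the unequal bipartition forcing, by pigeonhole, a zero factor in every term. The only (cosmetic) difference is that you phrase part (1) directly rather than by contradiction and derive part (2) as a corollary of part (1), whereas the paper reruns the determinant computation for the bipartite case; your organization is slightly cleaner.
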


\begin{proof}
$(a)$ Suppose not, for every permutation $\sigma$ of $V$, there is a vertex $a \in V$ such that the pair $(a, \sigma(a))$ is not an edge, but $det(A_{\Gamma}) \neq 0$.

Recall that $det(A_{\Gamma}) = \sum\limits_{i_1i_2...i_n \in S_n}a_{i_1,1}a_{i_2,2}a_{i_3,3} \dots a_{i_n,n}$ where $i_1i_2...i_n$ is a permutation of $123...n$ and $S_n$ is the collection of all permutations of objects $123...n$. Here, we assume $|V| = n$ and $123...n$ is a projection of each vertices in V.

Since $det(A_{\Gamma}) \neq 0$, then $\exists j_1j_2j_3...j_n \in S_n \text{ such that }a_{j_1,1}a_{j_2,2}a_{j_3,3} \dots a_{j_n,n} \neq 0 $. Therefore, $a_{j_i,i} \neq 0 \text{ for all } i \leq n.$  But since $j_1j_2j_3...j_n$ is a permutation of $123...n$, thus we can define $\sigma$ such that $\sigma(123...n) = j_1j_2j_3...j_n$. Since all $a_{j_i,i} \neq 0$, by definition of graph, the pair$(j_i,\sigma(j_i)) = (j_i,i)$ is an edge for all $j_i \in V$ which contradicts to the hypothesis. Therefore, $(a)$ is true.\\

$(b)$ By the definition of a bipartite graph, we can transfer $A_\Gamma$ to another matrix $B_\Gamma =
\begin{pmatrix}
0 & X \\
Y & 0
\end{pmatrix}$ by swapping rows and columns, and $|det(A_\Gamma)|=|det(B_\Gamma)|$.

Notice, $det(B_{\Gamma}) = \sum\limits_{i_1i_2...i_n \in S_n}b_{i_1,1}b_{i_2,2}b_{i_3,3} \dots b_{i_n,n}$, where $i_1i_2...i_n$ is a permutation of $123...n$ and $S_n$ is the collection of all permutations of objects $123...n$.. Without losing generality, we can assume $V_1 = \{1,2,3,..,j \}$, where $0 < n-j < j \Rightarrow |V_2| < |V_1|$.

$\forall \sigma(123...n) \in S_n$, define the set $\sigma (V_1) = \{ \sigma(i)| i \in V_1\}$. Since $\sigma$ is a permutation, we have $|\sigma(V_1)|=|V_1|$. But $|V_2| < |V_1| \Rightarrow  |V_2| < |\sigma(V_1)|$. Therefore, $ \exists \sigma (l) \in \sigma(V_1)$ such that $\sigma (l) \notin V_2$.

By the definition of the bipartite graph, $b_{\sigma (l),l} = 0$. Since $\sigma$ and $ l$ are arbitrary, we have $det(B_{\Gamma}) = \sum\limits_{i_1i_2...i_n \in S_n}b_{i_1,1}b_{i_2,2}b_{i_3,3}\dots b_{i_l,l}\dots  b_{i_n,n} = \sum\limits_{S_n}0=0$. Therefore $det(A_\Gamma)=det(B_\Gamma)=0$, $A_\Gamma$ has 0 eigenvalue follows.
\end{proof}

The following result can be proved by generating function methods from Section~3.4 of \cite{Aigner-Course-Enumeration}. Its first appearance seems to be identity XVI in \cite{Glaisher} (as combined with Euler's theorem that the number of partitions into odd distinct parts equals the number of self-conjugate partitions).

\begin{proposition}
\label{Prop:Aigner}
For $n > 2$, the number of even partitions of $n$ is different from the number of the odd partitions of $n$. In fact, the difference between these quantities is, up to a sign, the number of self-conjugate partitions of $n$.
\end{proposition}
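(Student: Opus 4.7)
The plan is to use the generating function approach indicated by the reference to Section~3.4 of \cite{Aigner-Course-Enumeration}. Let $e(n)$ and $o(n)$ denote the number of partitions of $n$ with, respectively, an even and an odd number of parts, and set
\[
P(q) = \sum_{\lambda} (-1)^{\ell(\lambda)} q^{|\lambda|} = \sum_{n \geq 0} (e(n) - o(n))\, q^n.
\]
Factoring over part multiplicities gives immediately $P(q) = \prod_{i \geq 1} (1+q^i)^{-1}$, so the task reduces to extracting coefficients from this product.

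The next step is to invoke Euler's identity $\prod_{i}(1+q^i) = \prod_i (1-q^{2i-1})^{-1}$, which itself follows by combining $(1+q^i)(1-q^i) = 1-q^{2i}$ over all $i$ and cancelling the even factors. This rewrites
\[
P(q) = \prod_{i \geq 1}(1 - q^{2i-1}) = \sum_\mu (-1)^{\ell(\mu)} q^{|\mu|},
\]
where the sum now ranges over partitions $\mu$ with all parts distinct and odd.

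The decisive observation, and really the only step requiring any thought, is that whenever $\mu$ has all odd parts one has $\ell(\mu) \equiv |\mu| \pmod 2$, since a sum of $k$ odd numbers has the parity of $k$. Hence $(-1)^{\ell(\mu)} = (-1)^n$ is constant on all summands of size $n$, and comparing coefficients of $q^n$ yields
\[
e(n) - o(n) = (-1)^n D(n),
\]
where $D(n)$ counts partitions of $n$ into distinct odd parts. The classical diagonal-hook bijection (reading off the hook lengths of the main-diagonal cells of a self-conjugate Young diagram as a sequence of distinct odd integers summing to $n$) then shows $D(n)$ equals the number of self-conjugate partitions of $n$, which is precisely the ``up to a sign'' content of the proposition.

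To conclude $e(n) \neq o(n)$ for $n > 2$, I would simply exhibit one partition of $n$ into distinct odd parts: take $\{n\}$ when $n$ is odd, and $\{n-1,1\}$ when $n$ is even and at least $4$. (For $n=2$ no such partition exists, which is exactly why the statement excludes that case.) The main obstacle is essentially bookkeeping; no new combinatorics is required beyond citing Euler's identity and the self-conjugate/distinct-odd bijection.
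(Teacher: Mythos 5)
Your proof is correct, and it is precisely the argument the paper points to but does not write out: the paper only cites the generating function methods of Section~3.4 of Aigner and Glaisher's identity ``combined with Euler's theorem that the number of partitions into odd distinct parts equals the number of self-conjugate partitions,'' which is exactly the chain $\prod_i(1+q^i)^{-1}=\prod_i(1-q^{2i-1})$ plus the diagonal-hook bijection that you carry out. Your parity observation $\ell(\mu)\equiv|\mu|\pmod 2$ for odd parts and the explicit witnesses $\{n\}$ and $\{n-1,1\}$ for nonvanishing are both sound, so nothing is missing.
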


\begin{proof}[First proof of Proposition~\ref{Proposition:A-symmetries} part (a)]

We approached this problem by using the graph theory.

First, to arrange all integer partitions of a number in such a way that all different integer partitions which have the same number of cycles (e.g. for $n=4, 2+2=4$ and $3+1=4$, and both 2,2 and 3,1 have two cycles, thus 22 and 31 are in the same level). Here is an example of integer partition of n:
\begin{displaymath}
   \xymatrix{
       (1)& 111...1 \ar[d]   \\
        (2)& 2111...1  \ar[d] \ar[dr] \\
        (3)&3111...1  \ar[d] \ar[dr]&2211...1 \ar[dl] \ar[d] \ar[dr] \\
        (4)&4111...1 \ar[d] & 3211...1 \ar[d] &222...1\ar[d] \\
          &\vdots & \vdots & \vdots \\
(n-1)& (n-1)1 \ar[d] & (n-2)2 \ar[dl] & (n-3)3 \ar[dll] & (n-4)4 \ar[dlll] & ... \ar[dllll]  \\
(n)& n
          }
\end{displaymath}
It means that one integer partition can be transformed to another one by only using one transposition when there is an arrow connect two different integer partitions.

Define the graph above as $\Gamma(V,E,f)$, where $V$ is the set of all possible cycle types of the integer partitions for n. E is the set of pairs of vertices which are connected by arrows. In other words, $(a,b) \in E$ if and only if $a$ can change to $b$ (or the other way around) by a transposition. $f(a,b)$ equals to the number of ways that $a$ can transfer to $b$.

A direct result from this graph is that the graph of the different cycle types of an integer is \textit{bipartite}. As shown in the graph, for every $a,b \in V$, if $(a,b) \in E$, then their levels are adjacent. Therefore, $V=V_1\cup V_2$, where $V_1=\{a\in V| $ the level of $a$ is an odd number $\}$ and $V_2=\{a\in V| $ the level of $a$ is an even number $\}$. Therefore, if $(a,b)\in E$, then $a\in V_1 $ and $B\in V_2$ or the other way around.

By the previous proposition, $|V_1|\neq |V_2|$. By proposition 3.3 (b), $det(A_n)=0$
\end{proof}

\begin{remark}
The method of Proposition~\ref{Prop:bipartite-zero} can in fact be used to prove more. In the notation of the proof of part (b) of that proposition, say $X$ is $m \times k$ and $Y$ is $k \times m$, with $m > k$. Then using the Schur complement formula, the characteristic polynomial of $A_\Gamma$ is
\[
\begin{split}
\det (\lambda I_{m+k} - A_\gamma)
& = \det (\lambda I_m) \ \det(\lambda I_k - X (\lambda I_m)^{-1} Y)
= \lambda^m \det(\lambda I_k - X \lambda^{-1} Y) \\
& = \lambda^{m-k} \det(\lambda^2 I_k - X Y).
\end{split}
\]
Therefore $A_\Gamma$ has $0$ as an eigenvalue of multiplicity at least $\abs{m-k}$. For the partition matrix $A_n$, using Proposition~\ref{Prop:Aigner}, it follows that the multiplicity of the zero eigenvalue is at least the number of self-conjugate partitions of $n$. This fact will be re-proved using the explicit formula for the eigenvalue in Proposition~\ref{Prop:Rho-row-column}.
\end{remark}

\begin{proposition}
Let $\Gamma$ be a bipartite graph. Suppose $A_\Gamma$ has an eigenvector $\binom{u}{v}$ (in its standard decomposition) with eigenvalue $\rho$. Then $\binom{-u}{v}$ is also an eigenvector, with eigenvalue $-\rho$.
\end{proposition}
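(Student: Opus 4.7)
The plan is to exploit the block structure of $A_\Gamma$ directly. Since $\Gamma$ is bipartite, by definition we can write
\[
A_\Gamma = \begin{pmatrix} 0 & X \\ Y & 0 \end{pmatrix},
\]
where the decomposition $\binom{u}{v}$ is with respect to the same partition $V = V_1 \sqcup V_2$, so $u$ is indexed by $V_1$ and $v$ by $V_2$. First I would unpack the eigenvalue equation $A_\Gamma \binom{u}{v} = \rho \binom{u}{v}$ into the pair of relations $Xv = \rho u$ and $Yu = \rho v$.

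Then the result is a one-line computation: applying $A_\Gamma$ to the candidate vector $\binom{-u}{v}$ gives
\[
A_\Gamma \binom{-u}{v} = \binom{Xv}{-Yu} = \binom{\rho u}{-\rho v} = -\rho \binom{-u}{v},
\]
which is exactly the desired eigenvector equation. There is no real obstacle here; the proposition is just the standard observation that the nonzero spectrum of a bipartite graph is symmetric about the origin, obtained by conjugating with the diagonal sign matrix $\operatorname{diag}(-I_{V_1}, I_{V_2})$, which anti-commutes with the off-diagonal block form. The only point to monitor is bookkeeping of which block the sign flip is applied to, so that the signs land on $-\rho u$ and $-(-\rho v)$ in the right places; this will then feed into Proposition~\ref{Proposition:A-symmetries}(b)--(c) once the partition matrix $A_n$ is recognized as arising from a bipartite graph as in the previous proof.
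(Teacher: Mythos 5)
Your proposal is correct and is essentially identical to the paper's proof: both unpack the block form into $Xv = \rho u$, $Yu = \rho v$ and then verify directly that $A_\Gamma\binom{-u}{v} = -\rho\binom{-u}{v}$. The sign bookkeeping in your computation is right, so nothing further is needed.
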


\begin{proof}
If
\[
\begin{pmatrix}
0 & X \\
Y & 0
\end{pmatrix}
\begin{pmatrix}
u \\ v
\end{pmatrix}
= \rho
\begin{pmatrix}
u \\ v
\end{pmatrix},
\]
then $X v = \rho u$, $Y u = \rho v$, and so
\[
\begin{pmatrix}
0 & X \\
Y & 0
\end{pmatrix}
\begin{pmatrix}
-u \\ v
\end{pmatrix}
= -\rho
\begin{pmatrix}
-u \\ v
\end{pmatrix}.
\]
\end{proof}

\begin{proof}[First proof of Proposition~\ref{Proposition:A-symmetries} parts (b,c)]
This follows from the preceding proposition and the observation that the partition graph is bipartite.
\end{proof}

\subsection{Border Strip Tableaux}\label{Section:Border-strip}

A border strip tableau of shape $\lambda$ and type $\mu$ is a Young diagram of shape $\lambda$ where the boxes are filled with numbers corresponding to $\mu$. If $\mu = \mu_1 \mu_2 ... \mu_n$, then we fill the boxes with $\mu_1$ 1s, $\mu_2$ 2s, etc, and $\mu_n$ n's. We must also follow the additional rules that, within any row or column, the numbers in the cells must be monotone increasing, and there must not be any $2 \times 2$ blocks of only one number (equivalently, the shape formed by any one number must be that of a staircase that only goes up and to the right). See Section 7.17 in \cite{Stanley-volume-2} for more details.

The height of a border strip is the number of rows it touches minus 1, and the height of a border strip tableau is the sum of the heights of its border strip (here, a border strip is the shape made by the cells of one number). We then have from the Murnaghan-Nakayama rule that

\begin{equation}
\label{Eq:Murnaghan}
\chi^\lambda(\mu) = \sum_{T} (-1)^{ht(T)}.
\end{equation}


We will prove that, for any border strip tableau, if its type is odd, then transposing the border strip changes the parity of its height (but not the heights of all of its parts obviously, just the sum of heights of the parts), and if the type is even, then transposing the border strip tableau does not change the parity of its height.






\begin{proof}[First proof of Theorem~\ref{theorem;Main} part (c)]
Similarly to the height, we can define the width $wd(T)$ of a border strip tableau $T$ by $wd(T) = ht(T')$. In other words, we count the number of columns touched by each border strip, subtract 1, and then sum over all the border strips comprising the tableau. Then, for any border strip tableau $T$, we have the following identity:
\[
\text{height + width + parts = cells},
\]
that is,
\[
ht(T) + wd(T) + m = n,
\]
where $n$ is the total number of cells in $T$ and $m$ is the number of parts of $T$.

This is because, given any $T$, adding a new cell will increase $n$ by one and will increase either $ht(T)$ or $wd(T)$ by one if we add a new cell whose entry already exists in $T$ or will increase $m$ by one if the entry of this new cell doesn’t already exist in $T$.

From the identity $ht(T) + wd(T) + m = n$, alternatively, $ht(T) + ht(T’) = n-m$, we see that $ht(T)$ and $ht(T’)$ have the same parity if $n-m$ is even, and they have opposite parities if $n-m$ is odd.

Therefore, we have
\[
\chi^{\lambda'}(\mu) = \sum_T(-1)^{ht(T)+m-n}.
\]
If $T$ has shape $\lambda$, then $m = \ell(\lambda)$, so we get that
\[
\chi^{\lambda'}(\mu) = (-1)^{n - \ell(\lambda)} \chi^\lambda(\mu). \qedhere
\]
\end{proof}


\subsection{Symmetric polynomials and the differential operator}

\begin{definition}
Denote by $SP(N)$ the symmetric polynomials in $N$ variables. One basis for this vector space are the \emph{power sum symmetric polynomials}, defined by
\[
p_k(x_1, \ldots, x_N) = \sum_{i=1}^N x_i^k
\]
and, for a partition $\lambda = 1^{k_1} 2^{k_2} \ldots$,
\begin{equation}
\label{P-S-expansion}
p_\lambda = p_1^{k_1} p_2^{k_2} \ldots.
\end{equation}
Another basis for the same space are the Schur polynomials $\{s_\lambda\}$, also indexed by partitions. According to Corollary 7.17.5 in \cite{Stanley-volume-2},
\begin{equation}
\label{S-P-expansion}
s_\lambda = \sum_\nu \frac{1}{z_\nu} \chi^\lambda(\nu) p_\nu.
\end{equation}
This can be taken as the definition of $\chi^\lambda(\nu)$, although these can also be computed using character values, or tableaux as in an Section~\ref{Section:Border-strip}. Incidentally, according to Corollary 7.17.4,
\[
p_\nu = \sum_\lambda \chi^\lambda(\nu) s_\lambda.
\]
\end{definition}

The following result is, for example, Definition~2.10 in \cite{Dumitriu-MOPS}.

\begin{theorem}[Macdonald]
\label{theorem:Schur-definition}
The Schur polynomial $s_\lambda(x_1, \ldots, x_N)$ is an eigenfunction of the operator
\[
D^\ast = \sum_{i=1}^N x_i^2 \frac{\partial^2}{\partial x_i^2} + \sum_{i \neq j} \frac{1}{x_i - x_j} \left( x_i^2 \frac{\partial}{\partial x_i} - x_j^2 \frac{\partial}{\partial x_j} \right).
\]
with eigenvalue
\[
2 n (N-1) + 2 \rho_\lambda.
\]
\end{theorem}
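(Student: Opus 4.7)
The strategy is to conjugate $D^*$ by the Vandermonde determinant $V = a_\delta = \prod_{i<j}(x_i - x_j)$, where $\delta = (N-1, N-2, \ldots, 0)$, so as to produce a much simpler operator that is manifestly diagonalized by the antisymmetric polynomials $a_\mu = \det(x_i^{\mu_j})$. Via the classical bialternant formula $s_\lambda = a_{\lambda + \delta}/V$, computing the eigenvalue of the conjugated operator on $a_{\lambda+\delta}$ is then equivalent to the theorem.

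First I would rewrite $D^*$ in a more convenient form. Relabeling $(i,j) \leftrightarrow (j,i)$ in the second sum of $D^*$ gives the formal identity
\[
D^*(h) = \sum_{i=1}^N x_i^2 \frac{\partial^2 h}{\partial x_i^2} + 2 \sum_{i \neq j} \frac{x_i^2}{x_i - x_j} \frac{\partial h}{\partial x_i}
\]
for any $h$. Now define $D^\sharp(h) \defeq V \cdot D^*(V^{-1} h)$. Expanding with the second-order Leibniz rule (the carré du champ of $D^*$ is $\Gamma(f,g) = \sum_i x_i^2 \partial_i f\, \partial_i g$) and using the logarithmic-derivative identity $\partial_i V/V = \sum_{j \neq i}(x_i - x_j)^{-1}$, the cross terms $2 V \sum_i x_i^2 \partial_i(V^{-1}) \partial_i h$ cancel the first-order sum in the displayed form of $D^*(h)$ exactly, leaving
\[
D^\sharp(h) = \sum_{i=1}^N x_i^2 \frac{\partial^2 h}{\partial x_i^2} + \bigl(V D^*(V^{-1})\bigr) \cdot h.
\]

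To identify the scalar $V D^*(V^{-1})$, apply $D^\sharp$ to $h = V$: trivially $D^\sharp(V) = V \cdot D^*(1) = 0$, while the displayed formula gives $D^\sharp(V) = \bigl(\sum_i \delta_i(\delta_i-1) + V D^*(V^{-1})\bigr) V$. The first summand appears because $V$ is a signed sum of monomials whose exponent vectors are permutations of $\delta$, and each such monomial is an eigenvector of $\sum_i x_i^2 \partial_i^2$ with eigenvalue $\sum_i \delta_i(\delta_i - 1)$. Hence $V D^*(V^{-1}) = -\sum_i \delta_i(\delta_i - 1)$, which is indeed a constant.

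The same monomial observation applied to $a_{\lambda+\delta}$ (whose exponent vectors are permutations of the strict partition $\lambda + \delta$) yields
\[
D^\sharp(a_{\lambda+\delta}) = \sum_{i=1}^N\bigl[(\lambda_i + \delta_i)(\lambda_i + \delta_i - 1) - \delta_i(\delta_i - 1)\bigr]\, a_{\lambda + \delta}.
\]
Expanding the bracket and substituting $\delta_i = N - i$, a short algebraic simplification collapses the scalar to $2n(N-1) + 2\rho_\lambda$. Since by definition $D^\sharp(V s_\lambda) = V\, D^*(s_\lambda)$, dividing through by $V$ yields the claim. The main obstacle is the Leibniz cancellation in the definition of $D^\sharp$: one must verify carefully that the first-derivative piece of $D^*(h)$ (in the simplified form) and the cross term from the Leibniz expansion of $D^*(V^{-1} h)$ are equal and opposite. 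Once that is in hand, identifying the eigenvalue is just bookkeeping with the monomial action of $\sum_i x_i^2 \partial_i^2$.
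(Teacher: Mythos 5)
Your proof is correct, and it is genuinely different from what the paper does: the paper does not prove this theorem at all, but simply cites it (as Definition 2.10 of Dumitriu--Edelman--Shuman; the statement goes back to Macdonald), so your conjugation-by-Vandermonde argument supplies a self-contained proof where the paper offers only a reference. The individual steps all check out: symmetrizing the second sum over ordered pairs $(i,j)$ does give $2\sum_{i\neq j}\frac{x_i^2}{x_i-x_j}\partial_i$; with $V=\prod_{i<j}(x_i-x_j)$ and $\partial_i V/V=\sum_{j\neq i}(x_i-x_j)^{-1}$ the Leibniz cross term $2\sum_i x_i^2\,V\partial_i(V^{-1})\,\partial_i h$ equals $-2\sum_{i\neq j}\frac{x_i^2}{x_i-x_j}\partial_i h$ and kills the first-order part, leaving $D^\sharp=\sum_i x_i^2\partial_i^2+V D^*(V^{-1})$; evaluating at $h=V$ pins the scalar to $-\sum_i\delta_i(\delta_i-1)$; and the monomial computation on $a_{\lambda+\delta}$ gives $\sum_i\bigl[(\lambda_i+N-i)(\lambda_i+N-i-1)-(N-i)(N-i-1)\bigr]=\sum_i\lambda_i(\lambda_i-2i+1)+2(N-1)\sum_i\lambda_i=2\rho_\lambda+2n(N-1)$, matching \eqref{Equation:Rho}. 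In a written-up version you should add two small remarks: the bialternant formula $s_\lambda=a_{\lambda+\delta}/V$ requires $\lambda$ to be padded with zeros to length $N$ (hence the standing assumption $N\geq\ell(\lambda)$), and the identity $D^*(s_\lambda)=(2n(N-1)+2\rho_\lambda)s_\lambda$ is a priori an identity of rational functions, but both sides are polynomials because $x_i^2\partial_i f-x_j^2\partial_j f$ vanishes on the diagonal $x_i=x_j$ for symmetric $f$. This is the standard Sekiguchi--Debiard ``radial part'' argument; it buys a complete proof at the cost of temporarily leaving the ring of symmetric polynomials, whereas the paper's citation keeps the exposition short but opaque.
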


\begin{theorem}
The matrix of the operator $\frac{1}{2} D^\ast$ with respect to the basis $\set{p_\lambda}_{\lambda \in P(n)}$ is, for $N > n$,
\[
A_n^T + n (N - 1),
\]
where $A_n$ is the matrix of the partition graph.
\end{theorem}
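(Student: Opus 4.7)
The plan is to exploit the fact that the Schur polynomials diagonalize $\frac{1}{2}D^\ast$ (by Macdonald's Theorem~\ref{theorem:Schur-definition}), while the change of basis between $\{s_\lambda\}$ and $\{p_\nu\}$ is governed precisely by the numbers $\chi^\lambda(\nu)$, which by Proposition~\ref{Proposition:Eigenvectors-A} are also the entries of the eigenvectors of $A_n$. So the matrix of $\frac{1}{2}D^\ast$ in the power-sum basis should naturally pick up $A_n^T$ (up to a shift) by a character-orthogonality computation.

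First I would observe that the hypothesis $N>n$ guarantees that $\{p_\nu\}_{\nu\in P(n)}$ and $\{s_\lambda\}_{\lambda\in P(n)}$ are both bases for the degree-$n$ homogeneous component of $SP(N)$; this lets us talk about a finite matrix $M$ of $\frac{1}{2}D^\ast$ (which is legitimate since $D^\ast$ preserves degree), defined by $\frac{1}{2}D^\ast p_\nu=\sum_\mu M_{\mu\nu}p_\mu$. Next, using $p_\nu=\sum_\lambda \chi^\lambda(\nu)s_\lambda$ (Corollary 7.17.4 in \cite{Stanley-volume-2}) together with Macdonald's theorem applied to $\frac{1}{2}D^\ast$, I would compute
\[
\tfrac{1}{2}D^\ast p_\nu=\sum_{\lambda\in P(n)}\chi^\lambda(\nu)\bigl(n(N-1)+\rho_\lambda\bigr)s_\lambda,
\]
and then re-expand each $s_\lambda$ in power sums via \eqref{S-P-expansion}. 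Reading off the coefficient of $p_\mu$ gives
\[
M_{\mu\nu}=\frac{1}{z_\mu}\sum_{\lambda\in P(n)}\chi^\lambda(\nu)\chi^\lambda(\mu)\bigl(n(N-1)+\rho_\lambda\bigr).
\]

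Now I would split this into two pieces. For the constant part, column orthogonality of characters $\sum_\lambda\chi^\lambda(\mu)\chi^\lambda(\nu)=z_\mu\delta_{\mu\nu}$ collapses the sum to $n(N-1)\delta_{\mu\nu}$, producing the scalar shift. For the $\rho_\lambda$ piece, I would use Proposition~\ref{Proposition:Eigenvectors-A} together with the row orthogonality $\sum_\nu z_\nu^{-1}\chi^\lambda(\nu)\chi^{\lambda'}(\nu)=\delta_{\lambda\lambda'}$ to write the spectral decomposition of $A_n$ explicitly as
\[
(A_n)_{\nu\mu}=\frac{1}{z_\mu}\sum_{\lambda\in P(n)}\chi^\lambda(\nu)\chi^\lambda(\mu)\,\rho_\lambda,
\]
so that transposing gives exactly the remaining sum in $M_{\mu\nu}$. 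Combining yields $M=A_n^T+n(N-1)I$.

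The main obstacle is purely bookkeeping: one must keep the transposes and indexing conventions for $U$, $U^{-1}$, and $\Lambda$ aligned, and be careful that the correct orthogonality relation is applied in each of the two halves of the sum (column orthogonality for the scalar shift, row orthogonality implicit in the spectral decomposition of $A_n$). A secondary concern is matching the normalization of $\rho_\lambda$ between Proposition~\ref{Proposition:Eigenvectors-A} and Macdonald's eigenvalue $2n(N-1)+2\rho_\lambda$ so that no stray factor of $2$ survives; once those conventions are fixed, the proof is essentially a direct change-of-basis computation wrapped around character orthogonality.
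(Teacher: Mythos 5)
Your argument is circular relative to the logical structure of the paper. The key step in your proposal identifies the $\rho_\lambda$ part of
\[
M_{\mu\nu}=\frac{1}{z_\mu}\sum_{\lambda\in P(n)}\chi^\lambda(\nu)\chi^\lambda(\mu)\bigl(n(N-1)+\rho_\lambda\bigr)
\]
with $(A_n^T)_{\mu\nu}$ by invoking the spectral decomposition of $A_n$, i.e.\ Proposition~\ref{Proposition:Eigenvectors-A}. But in the paper that proposition is itself a \emph{consequence} of the theorem you are asked to prove: the chain is Theorem (matrix of $\tfrac12 D^\ast$ is $A_n^T+n(N-1)$) $\Rightarrow$ Corollary~\ref{corollary:A-transpose} (the vectors $(z_\nu^{-1}\chi^\lambda(\nu))_\nu$ are eigenvectors of $A_n^T$ with eigenvalue $\rho_\lambda$) $\Rightarrow$ Proposition~\ref{Proposition:Eigenvectors-A} (via Lemmas~\ref{Lemma:Dual-bases} and~\ref{Lemma:Transpose}). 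The whole point of this theorem is to build the bridge between the differential operator $D^\ast$ (whose Schur eigenfunctions are known by Macdonald's theorem) and the combinatorially defined transposition matrix $A_n$; the spectral data for $A_n$ that you are using only becomes available \emph{after} that bridge exists. Your computation of $M_{\mu\nu}$ from Macdonald's theorem, the expansions $p_\nu=\sum_\lambda\chi^\lambda(\nu)s_\lambda$ and \eqref{S-P-expansion}, and column orthogonality is fine as far as it goes (it correctly produces the scalar shift $n(N-1)I$), but the remaining identification requires an input about $A_n$ that you have not independently established.

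The paper's actual proof is a direct calculation with no spectral input at all: it applies $D^\ast$ to $p_\lambda=\prod_l p_l^{k_l}$, works through the product and quotient rules and the identity $\sum_i x_i^t=p_t$, arrives at an expression whose nonconstant terms are multiples of $p_{2l}\,p_\lambda/p_l^2$, $p_a p_{l-a}\,p_\lambda/p_l$, and $p_{l_1+l_2}\,p_\lambda/(p_{l_1}p_{l_2})$, and matches those coefficients against the explicit entries of $A_n$ from Proposition~\ref{Proposition:Transition-matrix}. Your route could be rescued only by supplying an independent proof that $(\chi^\lambda(\nu))_\nu$ is an eigenvector of $A_n$ with eigenvalue $\rho_\lambda$ (for instance the classical central-character computation for the class sum of transpositions acting on an irreducible representation of $S_n$); the paper deliberately avoids character theory here, and without such an external input your proof does not stand on its own.
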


\begin{proof}
For a partition $\lambda = 1^{k_1} 2^{k_2} \ldots m^{k_m}$,
\[
\begin{split}
D^\ast(p_\lambda)
& = D^* \left(\prod_{l=1}^{m}p_l^{k_l} \right) \\
& = \sum_{i=1}^{N}x_{i}^2\frac{\partial }{\partial x_i^{2}}\prod_{l=1}^{m}p_l^{k_l}
+ \sum_{1\leq i\neq j\leq N}^{ }\frac{1}{x_i-x_j} \left(x_i^{2}\frac{\partial }{\partial x_i}(\prod_{l=1}^{m}p_l^{k_l})-x_j^{2}\frac{\partial }{\partial x_j}(\prod_{l=1}^{m}p_l^{k_l}) \right)
\end{split}
\]
\[
= \sum_{i=1}^{N}x_{i}^2\frac{\partial }{\partial x_i} \left((\prod_{l=1}^{m}p_l^{k_l})\sum_{l=1}^{m}\frac{k_llx_i^{l-1}}{p_l} \right)
+ \sum_{1\leq i\neq j\leq N}^{ }\frac{1}{x_i-x_j} \left((\prod_{l=1}^{m}p_l^{k_l})\sum_{l=1}^{m}\frac{k_ll(x_i^{l+1}-x_j^{l+1})}{p_l} \right)
\]
Using the product rule and the quotient rule in the first term, and algebra in the second, we get
\[
\begin{split}
& =(\prod_{l=1}^{m}p_l^{k_l}) \Biggl[ \sum_{i=1}^{N}x_{i}^2 \left(\sum_{l=1}^{m} \frac{k_l l ((l-1)p_lx_i^{l-2}-lx_i^{2l-2})}{p_l^{2}} + \left(\sum_{l=1}^{m}\frac{k_llx_i^{l-1}}{p_l} \right)^2 \right) \\
&\quad +\sum_{1\leq i\neq j\leq N}^{ } \sum_{l=1}^{m}\frac{k_ll}{p_l}\sum_{a=0}^{l} x_i^ax_j^{l-a} \Biggr]
\end{split}
\]
\[
\begin{split}
& =(\prod_{l=1}^{m}p_l^{k_l})
\Biggl[ \sum_{i=1}^{N} \left(\sum_{l=1}^{m} \left(\frac{k_ll(l-1)x_i^{l}}{p_l}-\frac{k_ll^2x_i^{2l}}{p_l^{2}} \right) + \sum_{l=1}^{m}\frac{k_l^{2}l^{2}x_i^{2l}}{p_l^{2}} + \sum_{1\leq l_1\neq l_2\leq m}^{ }\frac{k_{l_1}k_{l_2}l_1l_2x_i^{l_1+l_2}} {p_{l_1}p_{l_2}} \right) \\
&\quad +\sum_{1\leq i\neq j\leq N}^{ } \sum_{l=1}^{m}\frac{k_ll}{p_l}\sum_{a=0}^{l} x_i^ax_j^{l-a} \Biggr]
\end{split}
\]
\[
\begin{split}
& =(\prod_{l=1}^{m}p_l^{k_l})
\Biggl[ \sum_{i=1}^{N} \left(\sum_{l=1}^{m} \left(\frac{k_ll(l-1)x_i^{l}}{p_l} + \frac{k_l (k_l - 1) l^2x_i^{2l}}{p_l^{2}} \right) + \sum_{1\leq l_1\neq l_2\leq m}^{ }\frac{k_{l_1}k_{l_2}l_1l_2x_i^{l_1+l_2}} {p_{l_1}p_{l_2}} \right) \\
&\quad +\sum_{1\leq i\neq j\leq N}^{ } \sum_{l=1}^{m}\frac{k_ll}{p_l}\sum_{a=0}^{l} x_i^ax_j^{l-a} \Biggr]
\end{split}
\]
Since $\sum_{i=1}^N x_i^t = p_t$ and $\sum_{i=1}^N x_i^0 = N$,
\[
\begin{split}
& = (\prod_{l=1}^{m}p_l^{k_l}) \Biggl[ \sum_{l=1}^{m} \left(\frac{k_ll(l-1)p_l}{p_l}+\frac{k_l(k_l-1)l^2p_{2l}}{p_l^{2}} \right)+\sum_{1\leq l_1\neq l_2\leq m}^{ }\frac{k_{l_1}k_{l_2}l_1l_2p_{l_1+l_2}} {p_{l_1}p_{l_2}}  \\
&\quad +\sum_{l=1}^{m}\frac{k_ll}{p_l} \left(\sum_{a=1}^{l-1}(p_ap_{l-a}-p_l) + 2 p_l (N-1) \right) \Biggr]
\end{split}
\]
\[
\begin{split}
& =(\prod_{l=1}^{m}p_l^{k_l}) \Biggl[ \sum_{l=1}^{m} \left(k_ll(l-1)+\frac{k_l(k_l-1)l^2p_{2l}}{p_l^{2}} \right)+\sum_{1\leq l_1\neq l_2\leq m}^{ }\frac{k_{l_1}k_{l_2}l_1l_2p_{l_1+l_2}} {p_{l_1}p_{l_2}}  \\
&\quad + \sum_{l=1}^{m} k_ll \left( \sum_{a=1}^{l-1}\left(\frac{p_ap_{l-a}}{p_l}-1 \right)+2(N-1) \right) \Biggr]
\end{split}
\]

\[
\begin{split}
& =(\prod_{l=1}^{m}p_l^{k_l}) \Biggl[ \sum_{l=1}^{m}\frac{k_l(k_l-1)l^2p_{2l}}{p_l^{2}} + \sum_{1\leq l_1\neq l_2\leq m}^{ }\frac{k_{l_1}k_{l_2}l_1l_2p_{l_1+l_2}} {p_{l_1}p_{l_2}}  \\
&\quad +\sum_{l=1}^{m}\left(k_ll \left(\sum_{a=1}^{l-1}\frac{p_ap_{l-a}}{p_l}+2(N-1) \right) \right) \Biggr]
\end{split}
\]


Since $\sum_{l=1}^m k_l l = n$, we conclude that
\begin{multline*}
D^{*}\bigg( \prod_{l=1}^m p_{l}^{k_l} \bigg)  \\
=  \prod_{l=1}^m p_{l}^{k_l} \left[ 2(N-1)n +	\sum_{l=1}^{m} k_l l \bigg( \frac{(k_l - 1) l p_{2l}}{p_l^2} + \sum_{a=1}^{l-1}\frac{p_a p_{l-a}}{p_l}\bigg) + \sum_{1 \leq l_1 \neq l_2 \leq m} \frac{k_{l_1}k_{l_2}l_1 l_2 (p_{l_1+l_2})}{p_{l_1} p_{l_2}} \right].
\end{multline*}
Comparing coefficients with Proposition~\ref{Proposition:Transition-matrix}, we obtain the result.
\end{proof}

\begin{corollary}
\label{corollary:A-transpose}
For each $\lambda$, the vector $(\frac{1}{z_\nu} \chi^\lambda(\nu))_{\nu \in P(n)}$ is an eigenvector of the matrix $A_n^T$ with eigenvalue $\rho_\lambda$.
\end{corollary}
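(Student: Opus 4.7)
The plan is to simply chain together the two preceding results. By formula~\eqref{S-P-expansion}, the vector $(\tfrac{1}{z_\nu}\chi^\lambda(\nu))_{\nu \in P(n)}$ is precisely the coordinate vector of the Schur polynomial $s_\lambda$ in the basis $\{p_\nu\}_{\nu \in P(n)}$ of $SP(N)$ (after choosing $N > n$ so that the $p_\nu$ remain linearly independent, and noting that for $n$ fixed only the homogeneous degree-$n$ component is relevant). So I would first identify the abstract linear-algebra statement I need: that this coordinate vector is an eigenvector of $A_n^T$ with eigenvalue $\rho_\lambda$.

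Next, by Theorem~\ref{theorem:Schur-definition}, $s_\lambda$ is an eigenfunction of $D^*$ with eigenvalue $2n(N-1) + 2\rho_\lambda$, hence an eigenfunction of $\tfrac{1}{2}D^*$ with eigenvalue $n(N-1) + \rho_\lambda$. By the preceding theorem, the matrix of $\tfrac{1}{2}D^*$ in the basis $\{p_\nu\}_{\nu \in P(n)}$ is $A_n^T + n(N-1) I$. Translating the eigenfunction equation into coordinates, if $\mathbf{v} = (\tfrac{1}{z_\nu}\chi^\lambda(\nu))_{\nu \in P(n)}$ is the coordinate vector of $s_\lambda$, then
\[
\bigl(A_n^T + n(N-1) I\bigr)\mathbf{v} = \bigl(n(N-1) + \rho_\lambda\bigr)\mathbf{v}.
\]
Subtracting $n(N-1)\mathbf{v}$ from both sides yields $A_n^T \mathbf{v} = \rho_\lambda \mathbf{v}$, which is exactly the claim.

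There is essentially no obstacle here: the substantive work was already done in the preceding theorem (the direct computation of $D^*$ on $p_\lambda$) and in Macdonald's Theorem~\ref{theorem:Schur-definition}. The only small point worth mentioning in the write-up is that both $A_n$ and the vector $\mathbf{v}$ are independent of $N$, so the hypothesis $N > n$ in the preceding theorem is harmless: one picks any such $N$ and reads off the resulting eigenvalue equation, which is an identity of vectors in $\RR^{P(n)}$ and therefore holds absolutely. This corollary is the key ingredient that will then be transposed (combined with the bipartite-style symmetries of Proposition~\ref{Proposition:A-symmetries}) to yield Proposition~\ref{Proposition:Eigenvectors-A} and, subsequently, the main Theorem~\ref{theorem;Main}.
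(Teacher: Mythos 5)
Your proposal is correct and is exactly the paper's argument: the paper's own proof of this corollary is a one-line reference to Theorem~\ref{theorem:Schur-definition} and the expansion \eqref{S-P-expansion}, with the matrix identification $\tfrac{1}{2}D^* = A_n^T + n(N-1)I$ from the preceding theorem doing the real work. You have simply spelled out the coordinate translation and the harmlessness of the choice of $N$, which the paper leaves implicit.
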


\begin{proof}
This follows from Theorem~\ref{theorem:Schur-definition} and the expansion \eqref{S-P-expansion}.
\end{proof}

\begin{lemma}
\label{Lemma:Dual-bases}
The bases
\[
\set{ u_\lambda = (\chi^\lambda(\nu))_{\nu \in P(n)} : \lambda \in P(n)} \quad \text{and} \quad \set{\left(\frac{1}{z_\nu} \chi^\lambda(\nu)\right)_{\nu \in P(n)} : \lambda \in P(n)}
\]
are dual to each other.
\end{lemma}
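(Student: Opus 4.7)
The statement asserts that with respect to the standard dot product on $\RR^{P(n)}$, we have $\langle u_\lambda, v_\mu \rangle = \delta_{\lambda\mu}$, where $v_\mu$ denotes the vector $(\frac{1}{z_\nu}\chi^\mu(\nu))_{\nu \in P(n)}$. Written out, this is the orthogonality relation
\[
\sum_{\nu \in P(n)} \frac{\chi^\lambda(\nu)\,\chi^\mu(\nu)}{z_\nu} = \delta_{\lambda\mu},
\]
which is a direct consequence of the two change-of-basis formulas relating power-sum and Schur polynomials that the excerpt has already recorded.

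My plan is simply to substitute one expansion into the other. The excerpt gives
\[
s_\lambda = \sum_\nu \frac{1}{z_\nu}\chi^\lambda(\nu)\,p_\nu \qquad \text{and} \qquad p_\nu = \sum_\mu \chi^\mu(\nu)\,s_\mu
\]
(the first is equation~\eqref{S-P-expansion}, the second is Corollary 7.17.4 of \cite{Stanley-volume-2} quoted just after it). Inserting the second identity into the first gives
\[
s_\lambda = \sum_\nu \frac{\chi^\lambda(\nu)}{z_\nu} \sum_\mu \chi^\mu(\nu)\,s_\mu = \sum_\mu \left( \sum_\nu \frac{\chi^\lambda(\nu)\,\chi^\mu(\nu)}{z_\nu} \right) s_\mu.
\]
Working in $N \geq n$ variables (or equivalently in the abstract ring of symmetric functions), the Schur polynomials $\{s_\mu\}_{\mu \in P(n)}$ are linearly independent, so comparing coefficients of $s_\mu$ on both sides forces the bracketed quantity to equal $\delta_{\lambda\mu}$, which is exactly the desired duality.

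There is essentially no obstacle: the lemma falls out of combining the two previously stated expansions with the linear independence of Schur polynomials. The only subtlety worth flagging is the variable-count condition $N \geq n$ needed for $\{s_\mu\}_{\mu \in P(n)}$ to actually form a basis; this is the same assumption already used in the theorem identifying $\frac{1}{2}D^*$ with $A_n^T + n(N-1)$, so it is consistent with the rest of this section.
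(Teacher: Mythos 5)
Your proof is correct and is essentially identical to the paper's: both substitute the expansion of $p_\nu$ in Schur polynomials into the expansion of $s_\lambda$ in power sums and compare coefficients of $s_\mu$. Your remark about needing $N \ge n$ (or working in the ring of symmetric functions) for linear independence of the $s_\mu$ is a reasonable extra precaution that the paper leaves implicit.
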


\begin{proof}
Combining Corollaries 7.17.4 and 7.17.5 in \cite{Stanley-volume-2},
\[
s_\lambda = \sum_\nu \frac{1}{z_\nu} \chi^\lambda(\nu) \sum_\mu \chi^\mu(\nu) s_\mu
= \sum_\mu \left( \sum_\nu \frac{1}{z_\nu} \chi^\lambda(\nu) \chi^\mu(\nu) \right) s_\mu,
\]
and so
\[
(\chi^\lambda(\nu))_{\nu \in P(n)} \cdot \left(\frac{1}{z_\nu} \chi^\mu(\nu)\right)_{\nu \in P(n)}
= \sum_{\nu \in P(n)} \frac{1}{z_\nu} \chi^\lambda(\nu) \chi^\mu(\nu) = \delta_{\lambda \mu}. \qedhere
\]
\end{proof}

\begin{lemma}
\label{Lemma:Transpose}
Let $\set{e_i}$ be a basis of eigenvectors of matrix $C$, with eigenvalues $\set{\rho_i}$. Let $\set{f_i}$ be the dual basis, determined by the relation
\[
e_i \cdot f_j = \delta_{i j}.
\]
Then $\set{f_i}$ is a basis of eigenvectors for $C^T$, with eigenvalues $\set{\rho_i}$.
\end{lemma}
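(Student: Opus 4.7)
The plan is to package the eigenvector data into a change-of-basis matrix and then simply transpose. Let $E$ be the square matrix whose $i$-th column is the eigenvector $e_i$. The hypothesis $C e_i = \rho_i e_i$ can be rewritten in matrix form as $C E = E D$, where $D$ is the diagonal matrix with $\rho_i$ in position $(i,i)$. Since the $\{e_i\}$ form a basis, $E$ is invertible, hence $C = E D E^{-1}$.

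Next, I would translate the duality relation into a statement about matrices. Interpreting the pairing as $e_i \cdot f_j = f_j^T e_i$, if $F$ denotes the matrix whose $j$-th column is $f_j$, then $F^T E = I$, i.e.\ $F^T = E^{-1}$, equivalently $F = (E^{-1})^T$.

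The conclusion then follows by taking transposes:
\[
C^T = (E D E^{-1})^T = (E^{-1})^T D\, E^T = F D F^{-1},
\]
so $C^T F = F D$. Reading this equation column by column yields $C^T f_i = \rho_i f_i$ for every $i$, which is exactly the claim.

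I do not foresee a genuine obstacle here; the statement is essentially the identity $(E^{-1})^T$ together with the observation that conjugation by $E$ commutes with transposition up to swapping $E$ and $(E^{-1})^T$. The only thing to be careful about is distinguishing columns from rows and being consistent about the convention that the pairing $e_i \cdot f_j$ is $f_j^T e_i$, so that the dual-basis matrix really is $(E^{-1})^T$ and not $E^{-1}$ itself.
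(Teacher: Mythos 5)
Your proof is correct, but it takes a different route from the paper. You encode the eigenbasis in a matrix $E$, diagonalize $C = E D E^{-1}$, identify the dual basis matrix as $F = (E^{-1})^T$ via $F^T E = I$, and then transpose the whole factorization to get $C^T = F D F^{-1}$. The paper instead works directly with the pairing: for each $i,j$ it computes
\[
e_i \cdot C^T f_j = C e_i \cdot f_j = \rho_i\, e_i \cdot f_j = \rho_i \delta_{ij} = \rho_j \delta_{ij} = e_i \cdot (\rho_j f_j),
\]
and concludes $C^T f_j = \rho_j f_j$ because a vector is determined by its pairings against the basis $\{e_i\}$. The two arguments are logically equivalent, but they buy slightly different things: the paper's adjoint-style computation never forms a matrix inverse and is two lines long, while your factorization makes explicit the identification of the dual basis with the inverse-transpose of the change-of-basis matrix, which arguably explains more transparently why the eigenvalue attached to $f_i$ is the same $\rho_i$ (it is the same diagonal matrix $D$ appearing in both factorizations). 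Your one point of care --- fixing the convention $e_i \cdot f_j = f_j^T e_i$ so that $F^T E = I$ rather than $E^T F = I$ --- is exactly right and is the only place the argument could go wrong.
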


\begin{proof}
For all $i$,
\[
e_i \cdot C^T f_j = C e_i \cdot f_j = \rho_i e_i \cdot f_j = \rho_i \delta_{i j} = \rho_j \delta_{i j} = e_i \cdot (\rho_j f_j).
\]
Therefore $C_T f_j = \rho_j f_j$.
\end{proof}

\begin{proof}[Proof of Proposition~\ref{Proposition:Eigenvectors-A}]
The result follows by combining Corollary~\ref{corollary:A-transpose} with Lemmas~\ref{Lemma:Dual-bases} and \ref{Lemma:Transpose}.
\end{proof}

\begin{proof}[Proof of Theorem~\ref{theorem;Main} part (a)]
It follows from Lemma~\ref{Lemma:Dual-bases} that
\[
\begin{pmatrix} 1 \\ 0 \\ \vdots \\ 0 \end{pmatrix} = \sum_\lambda \frac{1}{z_{1^n}} \chi^\lambda(1^n) u_\lambda.
\]
Therefore
\[
\begin{split}
c_k(\mu)
& = \left( A_n^k \begin{pmatrix} 1 \\ 0 \\ \vdots \\ 0 \end{pmatrix} \right)_\mu
= \left(A_n^k \frac{1}{z_{1^n}} \sum_\lambda \chi^\lambda(1^n) u_\lambda \right)_\mu \\
& = \left(\frac{1}{z_{1^n}} \sum_\lambda \rho_\lambda^k \chi^\lambda(1^n) u_\lambda \right)_\mu
= \frac{1}{z_{1^n}} \sum_\lambda \rho_\lambda^k \chi^\lambda(1^n) \chi^\lambda(\mu).
\end{split}
\]
It remains to note that $z_{1^n} = n!$.
\end{proof}

The following is Theorem 7.14.5 in \cite{Stanley-volume-2}.

\begin{proposition}
Define an involution $\omega$ on $SP(N)$ by
\[
\omega p_\lambda = \epsilon_\lambda p_\lambda,
\]
where $\epsilon_\lambda = (-1)^{n - \ell(\lambda)}$ and $\ell(\lambda)$ is the number of parts (rows) in $\lambda$. Then also
\[
\omega s_\lambda = s_{\lambda'},
\]
where $\lambda'$ is the conjugate partition to $\lambda$, the one obtained by flipping rows and columns.
\end{proposition}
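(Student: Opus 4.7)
The plan is to deduce this proposition in one short computation by combining the Schur-to-power-sum expansion \eqref{S-P-expansion} with the sign identity for characters that has essentially already been established in the border strip argument leading to Theorem~\ref{theorem;Main}(c). The key observation is that the eigenvalue $\epsilon_\nu = (-1)^{n-\ell(\nu)}$ that $\omega$ attaches to $p_\nu$ is exactly the sign that converts $\chi^\lambda(\nu)$ into $\chi^{\lambda'}(\nu)$.

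Concretely, I would start from
\[
s_\lambda = \sum_{\nu \in P(n)} \frac{1}{z_\nu}\, \chi^\lambda(\nu)\, p_\nu,
\]
apply $\omega$ term by term using linearity and the defining relation $\omega p_\nu = \epsilon_\nu p_\nu$, and obtain
\[
\omega s_\lambda = \sum_{\nu \in P(n)} \frac{1}{z_\nu}\, (-1)^{n-\ell(\nu)}\, \chi^\lambda(\nu)\, p_\nu.
\]
Substituting $(-1)^{n-\ell(\nu)} \chi^\lambda(\nu) = \chi^{\lambda'}(\nu)$ then yields
\[
\omega s_\lambda = \sum_{\nu \in P(n)} \frac{1}{z_\nu}\, \chi^{\lambda'}(\nu)\, p_\nu,
\]
which is the expansion \eqref{S-P-expansion} applied to the conjugate partition $\lambda'$, and hence equal to $s_{\lambda'}$.

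The only real obstacle is being precise about which character identity is invoked. Theorem~\ref{theorem;Main}(c) is stated for the product $\chi_{\lambda,\mu} = \chi^\lambda(1^n) \chi^\lambda(\mu)$, but the underlying border strip argument in Section~\ref{Section:Border-strip}, via the tableau identity $ht(T) + ht(T') = n - m$ with $m$ equal to the number of border strips in $T$ (that is, $\ell(\nu)$, the number of distinct labels appearing), yields the refined version $\chi^{\lambda'}(\nu) = (-1)^{n-\ell(\nu)} \chi^\lambda(\nu)$ directly, which is what is needed here. Well-definedness of $\omega$ on $SP(N)$ and the involution property $\omega^2 = \mathrm{id}$ require no separate argument: they follow immediately from $\epsilon_\nu^2 = 1$ and the fact that $\{p_\nu\}_{\nu \in P(n)}$ is a basis of the degree-$n$ part of $SP(N)$ for $N \geq n$.
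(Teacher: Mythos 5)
Your argument is correct, but it is genuinely different from what the paper does: the paper offers no proof of this proposition at all, quoting it as Theorem~7.14.5 of \cite{Stanley-volume-2}. What you propose is, in effect, the paper's ``Second proof of Theorem~\ref{theorem;Main} part~(c)'' run backwards. The paper takes $\omega s_\lambda = s_{\lambda'}$ as known and deduces $\chi^{\lambda'}(\nu) = (-1)^{n-\ell(\nu)}\chi^\lambda(\nu)$ from it; you take that character identity as known --- via the Murnaghan--Nakayama rule \eqref{Eq:Murnaghan} and the transposition argument $ht(T)+ht(T')=n-m$ of Section~\ref{Section:Border-strip} --- and deduce $\omega s_\lambda = s_{\lambda'}$ from the expansion \eqref{S-P-expansion}. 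You are right to be careful about which source of the character identity you invoke: using the paper's second proof of part~(c) here would be circular, but the border strip proof is independent, so your derivation is sound. You also correctly identify $m$ as the number of border strips, i.e.\ $\ell(\nu)$, the number of parts of the \emph{type}; the paper's first proof of part~(c) slips at this point and writes $m=\ell(\lambda)$, whereas the statement of Theorem~\ref{theorem;Main}(c) and your reading ($(-1)^{n-\ell(\mu)}$) are the correct ones. The net effect of your route is a self-contained proof of the $\omega$--Schur identity modulo only the Murnaghan--Nakayama rule, at the cost of making the paper's ``second proof'' of part~(c) redundant relative to this derivation; the paper's choice to cite Stanley instead keeps the two proofs of part~(c) logically independent.
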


\begin{proof}[Second proof of Theorem~\ref{theorem;Main} part (c)]
Using the preceding proposition,
\[
\sum_\mu \frac{1}{z_\mu} \chi^{\lambda'}(\mu) p_\mu
= s_{\lambda'}
= \omega s_{\lambda}
= \sum_\mu \frac{1}{z_\mu} \chi^{\lambda}(\mu) \omega p_\mu
= \sum_\mu \frac{1}{z_\mu} (-1)^{n - \ell(\mu)} \chi^{\lambda}(\mu) p_\mu \qedhere
\]
\end{proof}

\begin{proposition}
\label{Prop:Rho-row-column}
$\rho_{\lambda}=\sum_{b \in \lambda }^{ }(b_x-b_y)$

where $b$ is a box in the Young diagram for $\lambda$ and $b_x$ and $b_y$ are the column and row indices of $b$ respectively.
\end{proposition}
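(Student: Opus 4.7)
The plan is to prove this by direct computation, comparing the proposed formula with the closed form for $\rho_\lambda$ given in equation~\eqref{Equation:Rho}, namely $2\rho_\lambda = \sum_i \lambda_i(\lambda_i - 2i + 1)$. Since both sides of the claimed identity are finite sums over the boxes of $\lambda$, and the expression $\lambda_i(\lambda_i - 2i + 1)$ naturally decomposes row by row, the matching should be immediate once I compute $\sum_b b_x$ and $\sum_b b_y$ separately.

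First I would observe that row $i$ of the Young diagram contains exactly $\lambda_i$ boxes whose column indices are $1, 2, \ldots, \lambda_i$ and whose row index is uniformly $i$. Summing the column indices row by row,
\[
\sum_{b \in \lambda} b_x = \sum_{i=1}^{\ell(\lambda)} \sum_{j=1}^{\lambda_i} j = \sum_{i=1}^{\ell(\lambda)} \frac{\lambda_i(\lambda_i+1)}{2},
\]
and similarly
\[
\sum_{b \in \lambda} b_y = \sum_{i=1}^{\ell(\lambda)} \sum_{j=1}^{\lambda_i} i = \sum_{i=1}^{\ell(\lambda)} i \, \lambda_i.
\]

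Combining these two calculations and multiplying by $2$,
\[
2 \sum_{b \in \lambda}(b_x - b_y) = \sum_{i=1}^{\ell(\lambda)} \lambda_i(\lambda_i + 1) - 2 \sum_{i=1}^{\ell(\lambda)} i\,\lambda_i = \sum_{i=1}^{\ell(\lambda)} \lambda_i(\lambda_i - 2i + 1) = 2 \rho_\lambda
\]
by~\eqref{Equation:Rho}, and dividing by $2$ gives the claimed identity. There is no real obstacle here; the only minor bookkeeping issue is making sure the row-by-row decomposition of the box sum lines up exactly with the index $i$ appearing in~\eqref{Equation:Rho}, which it clearly does.
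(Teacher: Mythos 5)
Your computation is correct: row $i$ contributes column indices summing to $\lambda_i(\lambda_i+1)/2$ and row indices summing to $i\lambda_i$, and the algebra $\lambda_i(\lambda_i+1) - 2i\lambda_i = \lambda_i(\lambda_i - 2i + 1)$ matches \eqref{Equation:Rho} exactly. The route is genuinely different from the paper's, though. The paper argues incrementally: it adds one box to the diagram at position (row $i$, column $\lambda_i+1$) and computes the change $\rho_{\lambda^\ast} - \rho_\lambda = \lambda_i + 1 - i$, which is precisely $b_x - b_y$ for the added box; telescoping from the empty partition (where $\rho = 0$) gives the identity. Your version evaluates both sides in closed form by a row-by-row sum. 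Both are elementary one-step algebra; yours is a bit more direct and self-contained, while the paper's incremental formulation makes visible the fact that $\rho_\lambda$ is built up box by box from the content $b_x - b_y$ of each cell, which is the structural point later exploited in the proof of Theorem~\ref{theorem;Main} part (b) (conjugation swaps $b_x$ and $b_y$, hence negates $\rho$) --- a consequence your argument also delivers with no extra work.
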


\begin{proof}
The formula for an eigenvalue corresponding to a certain partition $\lambda$ is
\[
2 \rho_\lambda=\sum_{i=1}^{n} \lambda_i( \lambda_i-2i+1).
\]
Let $\lambda^\ast$ be another partition whose Young diagram has one more block than the Young diagram for $\lambda$. This means that for some value of $i$ between $1$ and $\ell(\lambda)$, $\lambda_i$ is increased by one (and we also permit increasing $\lambda_{\ell(\lambda) + 1}$ from $0$ to $1$ meaning the row was initially of length $0$). Then
\[
2 \rho_{\lambda^\ast} - 2 \rho_{\lambda}=(\lambda_i+1)(\lambda_i-2i+2) -\lambda_i(\lambda_i-2i+1).
\]
Expanding and canceling like terms yields
\[
\rho_{\lambda^\ast} - \rho_{\lambda}=\lambda_i+1-i.
\]
If we were to index the rows and columns of Young diagrams starting with 1 from the top and the left respectively, we see that $\lambda_1+1$ is the column index and $i$ is the row index of the one block that we added to $\lambda$ to form $\lambda^\ast$. Thus, since it is clear that
\[
\rho_{\lambda_0} = 0,
\]
where $\lambda_0$ is the empty partition (has an empty Young diagram), we get that
\[
\rho_{\lambda}=\sum_{b \in \lambda }^{ }(b_x-b_y),
\]
where $b$ is a block of the Young diagram for $\lambda$ and $b_x$ and $b_y$ are the column and row indices of $b$ respectively.
\end{proof}

\begin{proof}[Proof of Theorem~\ref{theorem;Main} part (b)]

We now see that replacing $\lambda$ with $\lambda'$ switches $b_x$ and $b_y$ for each block in $\lambda$, so we get the desired result that
$\rho_{\lambda} = -\rho_{\lambda'}$
In particular, if $\lambda$ is self-conjugate, then
$\rho_{\lambda} = 0$.
\end{proof}

\begin{proof}[Second proof of Proposition~\ref{Proposition:A-symmetries}]
Combine Theorem~\ref{theorem;Main} parts (b,c) with Proposition~\ref{Proposition:Eigenvectors-A}.
\end{proof}

%
%

\subsection{Examples}

\begin{lemma}
If a permutation $\alpha$ has $k$ cycles, so that the corresponding partition $\mu$ has $k$ parts, then the sum in Theorem~\ref{theorem;Main} needs to be taken only over partitions $\lambda$ whose Young diagram can be represented as a union of $k$ hooks, that is, over diagrams which do not contain a $(k+1)$-by-$(k+1)$ square.
\end{lemma}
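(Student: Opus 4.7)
The plan is to use Theorem~\ref{theorem;Main}(a) together with the Murnaghan-Nakayama formula~\eqref{Eq:Murnaghan} to reduce the claim to a combinatorial assertion about border strip tableaux. Since
\[
c_k(\mu) = \frac{1}{n!} \sum_\lambda \chi^\lambda(1^n)\chi^\lambda(\mu)\, \rho_\lambda^k,
\]
the $\lambda$-term vanishes unless $\chi^\lambda(\mu)\neq 0$, and by Murnaghan-Nakayama this requires that at least one border strip tableau of shape $\lambda$ and type $\mu$ exists. Such a tableau is encoded by a chain $\emptyset = \lambda^{(0)} \subsetneq \lambda^{(1)} \subsetneq \cdots \subsetneq \lambda^{(k)} = \lambda$ in which each successive difference $\lambda^{(j)}/\lambda^{(j-1)}$ is a border strip of size $\mu_j$. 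It therefore suffices to show that if $\lambda$ contains a $(k+1)\times(k+1)$ square, then no such chain can terminate at $\lambda$.

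The engine of the argument is the following lemma: if $\nu\subset\nu'$ are partitions and $\nu'/\nu$ is a border strip, then the Durfee square of $\nu'$ has side length at most one greater than that of $\nu$. Granting this, an induction on $k$ starting from the empty partition shows that the Durfee square of $\lambda$ has side length at most $k$, which is exactly the statement $\lambda_{k+1}\leq k$, i.e., $\lambda$ does not contain a $(k+1)\times(k+1)$ square. To prove the lemma, let $d$ be the side length of the Durfee square of $\nu$, so that $\nu_{d+1}\leq d$, and suppose for contradiction that the Durfee square of $\nu'$ has side at least $d+2$. Then the four boxes $(d+1,d+1)$, $(d+1,d+2)$, $(d+2,d+1)$, $(d+2,d+2)$ all lie in $\nu'$; however, since every row of $\nu$ past the $d$-th has length at most $d$, none of them lie in $\nu$. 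Hence all four belong to the skew shape $\nu'/\nu$, contradicting the defining property that a border strip contains no $2\times 2$ block.

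The equivalence between ``union of $k$ hooks'' and ``no $(k+1)\times(k+1)$ square'' is the standard nested hook decomposition of a Young diagram: a partition with Durfee square of side $d$ decomposes into exactly $d$ nested hooks (the $i$-th hook being the diagonal cell $(i,i)$ together with the remaining cells of row $i$ and column $i$), and conversely any disjoint union of $k$ hooks meets the main diagonal in at most $k$ boxes, so its Durfee square has side at most $k$. The only real obstacle I anticipate is articulating the Durfee-square bound for border strip additions, but as sketched above this collapses to a one-line $2\times 2$-block argument, so the overall proof should be short.
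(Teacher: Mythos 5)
Your proof is correct and takes essentially the same approach as the paper, whose entire argument is the one-line assertion that the claim ``follows from the description of the coefficients $\chi^\lambda(\mu)$ in terms of border strip tableaux of shape $\lambda$ and type $\mu$.'' You simply supply the details the paper leaves implicit --- namely that each of the $k$ border strips can enlarge the Durfee square by at most one, via the no-$2\times 2$-block property --- and that filling-in is sound.
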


\begin{proof}
This follows from the description of the coefficients $\chi^\lambda(\mu)$ in the expansion in terms of border strip tableaux of shape $\lambda$ and type $\mu$.
\end{proof}

The case of a single cycle is treated in the Introduction. We now describe the case of two cycles.

\begin{lemma}
Denote
\[
\lambda_{a,b} = a 1^b \in P(n), \qquad 0 < a, \quad 0 \leq b, \quad a+b=n,
\]
the hooks, and
\[
\lambda_{a, b, c, d} = a (c+1) 2^d 1^{b-d-1} \in P(n), \qquad 1 \leq c < a, \quad 0 \leq d < b, \quad a + b + c + d = n,
\]
general partitions whose Young diagrams contain a 2-by-2 square but not a 3-by-3 square. Then
\[
\chi^{\lambda_{a,b,c,d}}(1^n) = \binom{n}{a, b, c, d} \frac{a c (a-c) (b-d)}{(a+b) (a+d) (b+c) (c+d)}
\]
and
\[
\rho_{\lambda_{a,b,c,d}} = \binom{a}{2} - \binom{b+1}{2} + \binom{c}{2} - \binom{d+1}{2}.
\]
\end{lemma}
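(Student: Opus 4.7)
The second formula is the easier one, and I would tackle it first using Proposition~\ref{Prop:Rho-row-column}, which expresses $\rho_\lambda$ as $\sum_{b\in\lambda}(b_x-b_y)$. I would decompose the Young diagram of $\lambda_{a,b,c,d}$ into four pieces: the first row (boxes $(1,1),\ldots,(1,a)$), the rest of the first column (boxes $(2,1),\ldots,(b+1,1)$), the rest of the second row (boxes $(2,2),\ldots,(2,c+1)$), and the rest of the second column (boxes $(3,2),\ldots,(d+2,2)$). Summing $b_x - b_y$ over each piece gives a telescoping arithmetic sum: the first row contributes $\sum_{j=1}^{a}(j-1)=\binom{a}{2}$, the first column contributes $-\binom{b+1}{2}$, the second row contributes $\binom{c}{2}$, and the second column contributes $-\binom{d+1}{2}$. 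Adding these yields the claimed formula.

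For the first formula I would use the hook length formula $f^{\lambda} = n!/\prod_{b\in\lambda}h(b)$ for the number of standard Young tableaux. The geometry of $\lambda_{a,b,c,d}$ consists of two nested hooks, so I would partition the boxes into classes according to which arm/leg they sit on, and compute the hook length in each class. The four ``corners'' $(1,1)$, $(1,2)$, $(2,1)$, $(2,2)$ will contribute precisely the four factors $(a+b)$, $(a+d)$, $(b+c)$, $(c+d)$ appearing in the denominator of the target formula. The remaining boxes on the first row split at column $c+1$: for $3\le j\le c+1$ the box $(1,j)$ has leg $1$ and hook $a-j+2$, and for $c+2\le j\le a$ it has leg $0$ and hook $a-j+1$; together these produce $(a-1)!/(a-c)$. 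The boxes on the second row (beyond $(2,2)$) produce $(c-1)!$, the boxes on the first column split at row $d+2$ analogously and produce $b!/(b-d)$, and the boxes on the second column (below $(2,2)$) produce $d!$.

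Collecting, the product of hook lengths becomes
\[
\prod_{b\in\lambda}h(b)=(a+b)(a+d)(b+c)(c+d)\cdot\frac{(a-1)!}{a-c}\cdot (c-1)!\cdot\frac{b!}{b-d}\cdot d!.
\]
Dividing $n!$ by this expression and comparing with $\binom{n}{a,b,c,d}=n!/(a!\,b!\,c!\,d!)$, the factors $a$, $c$, $(a-c)$, $(b-d)$ in the numerator of the target formula arise exactly from the discrepancies $a!/(a-1)!=a$, $c!/(c-1)!=c$, and the denominators $a-c$ and $b-d$ moving to the numerator.

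The only real obstacle is avoiding off-by-one errors in the enumeration of boxes, especially at the boundary rows (the transition from the two-box rows at row $d+2$ to the one-box rows starting at row $d+3$) and at the boundary column $c+1$ on the first row. I would verify the formula in the smallest nontrivial case, say $(a,b,c,d)=(2,2,1,1)$ giving $\lambda=(2,2,2)$, against a direct count of SYTs before writing up the general argument.
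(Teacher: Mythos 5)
Your proposal is correct and matches the paper's proof essentially verbatim: the paper also derives $\rho_{\lambda_{a,b,c,d}}$ from Proposition~\ref{Prop:Rho-row-column} and computes $\chi^{\lambda_{a,b,c,d}}(1^n)$ via the hook length formula, arriving at exactly the same product $(a+b)(a+d)(b+c)(c+d)\,\frac{(a-1)!}{a-c}\,(c-1)!\,\frac{b!}{b-d}\,d!$ of hook lengths. Your box-by-box bookkeeping (corners, arms, legs, and the splits at column $c+1$ and row $d+2$) is accurate, so nothing further is needed.
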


\begin{proof}
The product of hook lengths in the diagram $\lambda_{a,b,c,d}$ is
\[
\begin{split}
&(a+b) (a+d) (b+c) (c+d) \prod_{i=1}^{a-c-1} i \prod_{i=a-c}^{a-2} (i+1) \prod_{j=1}^{c-1} j \prod_{i=1}^{b-d-1} i \prod_{i=b-d}^{b-1} (i+1) \prod_{j=1}^d j \\
& = (a+b) (a+d) (b+c) (c+d) \frac{(a-1)!}{a-c} (c-1)! \frac{b!}{b-d} d!.
\end{split}
\]
So using the hook length formula, Corollary 7.21.6 in \cite{Stanley-volume-2},
\[
\chi^{\lambda_{a,b,c,d}}(1^n) = \binom{n}{a, b, c, d} \frac{a c (a-c) (b-d)}{(a+b) (a+d) (b+c) (c+d)}
\]

The formula for $\rho_{\lambda_{a,b,c,d}}$ follows from Proposition~\ref{Prop:Rho-row-column}.
\end{proof}

\begin{corollary}
Let $\mu$ have two parts of sizes $m \geq k$, so that the corresponding permutation has two cycles of these sizes. The only partitions which contribute to the sum in Theorem~\ref{theorem;Main} are of the form $\lambda_{a,b}$ or $\lambda_{a,b,c,d}$ from the preceding lemma. More explicitly, denote
\[
\Lambda_1(m,k) = \set{\lambda_{i, m-i, j, k-j} : 1 \leq j \leq k, j < i < m - k + j},
\]
\[
\Lambda_2(m,k) = \set{\lambda_{i, k-j, j, m-i} : 1 \leq j \leq k, m - k + j < i \leq m},
\]
\[
\Lambda_3(m,k) = \set{\lambda_{i, m-j, j, k-i} : 1 \leq j < i \leq k},
\]
\[
\Lambda_4(m,k) = \set{\lambda_{i+k, m-i} : m-k < i \leq m},
\]
\[
\Lambda_5(m,k) = \set{\lambda_{i, m-i+k} : 1 \leq i < k}.
\]
\[
\Lambda_6(m,k) = \set{\lambda_{i+k, m-i} : 0 \leq i \leq m-k} = \set{\lambda_{i, m-i+k} : k \leq i \leq m},
\]
For $m > k$, all of these sets are disjoint, each partition on $\Lambda_1 \cup \ldots \cup \Lambda_5$ has a single border strip tableau with filling $\mu$, and each partition in $\Lambda_6$ has two such tableaux. If $m = k$, then $\Lambda_1(k,k) = \emptyset$, $\Lambda_6(k,k) = \set{\lambda_{k, k}}$, and $\Lambda_2(k,k) = \Lambda_3(k,k)$ with each partition in the latter set having two border strip tableaux with filling $\mu$.

Then
\[
c_k(\mu) = \frac{1}{n!} \sum_{\lambda \in \Lambda} \chi_{\lambda}(\mu) \chi^\lambda(1^n) \rho_\lambda^k.
\]
Here $\chi^\lambda(1^n)$ and $\rho_\lambda$ are as in the preceding lemma. For $m > k$,
\[
\Lambda = \Lambda_1 \cup \Lambda_2 \cup \Lambda_3 \cup \Lambda_4 \cup \Lambda_5 \cup \Lambda_6,
\]
and
\[
\chi^{\lambda_{a,b,c,d}}(\mu) =
\begin{cases}
(-1)^{b+d}, & \lambda_{a,b,c,d} \in \Lambda_1(m,k), \\
(-1)^{b+d+1}, & \lambda_{a,b,c,d} \in \Lambda_2(m,k) \cup \Lambda_3(m,k).
\end{cases}
\]
For $m=k$,
\[
\Lambda = \Lambda_2 \cup \Lambda_4 \cup \Lambda_5 \cup \Lambda_6,
\]
and
\[
\chi^{\lambda_{a,b,c,d}}(\mu) =
2 (-1)^{b+d+1}, \quad \lambda_{a,b,c,d} \in \Lambda_2(k,k).
\]
In either case,
\[
\chi^{\lambda_{a,b}}(\mu) =
(-1)^{b}, \quad \lambda_{a,b} \in \Lambda_4(m,k) \cup \Lambda_5(m,k)
\]
and
\[
\chi^{\lambda_{a,b}}(\mu) =
\begin{cases}
2 (-1)^{b}, & \lambda_{a,b} \in \Lambda_6(m,k), k \text{ odd}, \\
0, & \lambda_{a,b} \in \Lambda_6(m,k), k \text{ even}.
\end{cases}
\]
\end{corollary}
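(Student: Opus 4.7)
The plan is to combine the preceding lemma with the Murnaghan--Nakayama rule \eqref{Eq:Murnaghan} and carry out a direct case analysis on border strip tableaux. Since $\mu$ has two parts, the preceding lemma restricts the sum in Theorem~\ref{theorem;Main}(a) to $\lambda$ whose Young diagram is a union of two hooks, i.e.\ avoids a $3\times 3$ square; these are precisely the hooks $\lambda_{a,b}$ and the 2-hook shapes $\lambda_{a,b,c,d}$ from the preceding lemma. For each such $\lambda$, a border strip tableau of type $(m,k)$ is specified by a Young subdiagram $\lambda^{(1)}\subseteq\lambda$ of size $m$ (the cells filled with $1$) whose complementary skew shape of size $k$ (the cells filled with $2$) is a border strip; the requirement that $\lambda^{(1)}$ itself be a border strip is automatic, since any Young subdiagram of a 2-hook shape is again a hook or a 2-hook. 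By \eqref{Eq:Murnaghan}, $\chi^\lambda(\mu)=\sum_T (-1)^{ht(T)}$, and the task reduces to classifying these tableaux and summing their signed heights in each family $\Lambda_i$.

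I would first handle the single-hook case $\lambda=\lambda_{a,b}$. Writing the $1$-subdiagram as $\lambda_{p',q'}$ with $p'+q'=m$, the complement $\lambda/\lambda^{(1)}$ splits into a horizontal segment in row $1$ and a vertical segment in column $1$ that are never edge-adjacent. Hence the complement is a border strip exactly when one segment is empty, yielding Case A with $p'=a$ (requiring $b\geq k$) and Case B with $q'=b$ (requiring $a>k$). Matching parameter ranges against the definitions identifies $\Lambda_4$ and $\Lambda_5$ as the loci where exactly one of the two cases applies, while $\Lambda_6$ is where both apply and therefore contributes two tableaux generically. A direct height computation---a hook $\lambda_{p',q'}$ has height $q'$, a vertical $k$-strip has height $k-1$, and a horizontal $k$-strip has height $0$---yields the total heights, from which the signs $\chi^{\lambda_{a,b}}(\mu)$ follow by \eqref{Eq:Murnaghan}. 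For the 2-hook shapes $\lambda=\lambda_{a,b,c,d}$ I would argue analogously: a geometric analysis shows the $2$-strip of size $k$ attaches to the boundary of $\lambda$ in essentially three ways while leaving a valid border strip of size $m$ behind, and these three positions correspond to the families $\Lambda_1,\Lambda_2,\Lambda_3$. The explicit $(i,j)$ ranges come from the border-strip condition on both pieces, and reading off heights from $(a,b,c,d)$ produces the $(-1)^{b+d}$ and $(-1)^{b+d+1}$ signs. The special case $m=k$ is treated separately: an involution swapping the two labels (well-defined only when $\mu_1=\mu_2$) identifies $\Lambda_2$ with $\Lambda_3$ and doubles the surviving tableau counts, while $\Lambda_1$ becomes empty and $\Lambda_6$ collapses to $\{\lambda_{k,k}\}$.

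The main obstacle is the case analysis for the 2-hook shapes: one must verify that each family $\Lambda_1,\Lambda_2,\Lambda_3$ is enumerated exactly once, that no further configurations slip through, and that the height of each tableau is tracked correctly across the two non-trivial border strips. Establishing the disjointness of the six families and handling the boundary parameter values (such as $b=0$, $a=k$, or $m=k$) will require careful bookkeeping, and the sign-tracking in the 2-hook case is where most of the subtlety lies.
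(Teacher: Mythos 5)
Your proposal follows the same route as the paper's own argument, which is itself only a sketch: use the preceding lemma to restrict to shapes avoiding a $3\times 3$ square, enumerate the border strip tableaux of type $(m,k)$ shape by shape, and read the signs off the heights via \eqref{Eq:Murnaghan}. Strategically this is exactly right, and your height bookkeeping for the hook case (a hook $\lambda_{p',q'}$ has height $q'$, a vertical $k$-strip has height $k-1$, a horizontal strip has height $0$) is correct.

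One claim in your first paragraph is false and must be repaired: you assert that the border-strip condition on the $1$-region $\lambda^{(1)}$ is automatic because any Young subdiagram of a $2$-hook shape is again a hook or a $2$-hook. A $2$-hook is \emph{not} a border strip; a straight shape is a border strip precisely when it is a hook, i.e.\ contains no $2\times 2$ square, so requiring $\lambda^{(1)}$ to be a hook is a genuine constraint. For example, in $\lambda=(3,2)$ with $\mu=(4,1)$, the subdiagram $(2,2)$ has border-strip complement $\{(1,3)\}$ but does not give a valid border strip tableau, and counting it would corrupt $\chi^{(3,2)}((4,1))$. You later (correctly) invoke ``the border-strip condition on both pieces,'' so delete the earlier sentence and impose the hook condition on $\lambda^{(1)}$ throughout.

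Beyond that, your proposal, like the paper's proof, stops where the actual work begins: the exhaustiveness and disjointness of $\Lambda_1,\dots,\Lambda_6$, the attachment analysis for the $2$-hook shapes, and all the sign computations are deferred. If you do execute the hook case as described, note that your own (correct) height count gives total height $b-1$, hence sign $(-1)^{b-1}$, for the unique tableau attached to $\lambda_{i,m-i+k}\in\Lambda_5$, whereas the statement asserts $(-1)^b$ (test case: $\chi^{(1^4)}((2,2))=+1$); and for an element $\lambda_{i+k,m-i}\in\Lambda_6$ with $1\leq i\leq m-k$ the two tableaux you identify have heights $b$ and $b-1$ and therefore cancel for every $k$, while for $i=0$ only the vertical tableau exists. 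So carrying out the case analysis carefully is not optional here: it is where the content of the corollary lies, and it will require reconciling your computation with the signs as printed.
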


\begin{proof}
We only sketch the argument; the details are left to the interested reader. The partitions $\lambda$ which are not hooks and have non-zero $\chi^{\lambda}(\mu)$ have the form
\[
\lambda_{\max(i,j), \max(m-i, k-j), \min(i,j), \min(m-i, k-j)}
\]
for $1 \leq i < m$, $1 \leq j \leq k$, $i \neq j$, $m-i \neq k-j$. The sets $\Lambda_1-\Lambda_4$ parameterize this collection. The values of $\chi^{\lambda}(\mu)$ are obtained using the border strip tableaux representation.
\end{proof}


\def\cprime{$'$} \def\cprime{$'$}
\providecommand{\bysame}{\leavevmode\hbox to3em{\hrulefill}\thinspace}
\providecommand{\MR}{\relax\ifhmode\unskip\space\fi MR }
\providecommand{\MRhref}[2]{%
  \href{http://www.ams.org/mathscinet-getitem?mr=#1}{#2}
}
\providecommand{\href}[2]{#2}

\raggedright


\end{document}